\newcommand{\usepackageifexists}[1]{%
    \IfFileExists{#1.sty}{\usepackage{#1}}%
       {\GenericInfo{taglia}{Il package #1 non esiste.}}}
\theoremstyle{plain}
\newtheorem{Thm}{Theorem}
\newtheorem*{Thm*}{Theorem}
\newtheorem{Prop}{Proposition}
\newtheorem{Lem}[Prop]{Lemma}
\newtheorem{Cor}[Prop]{Corollary}
\theoremstyle{definition}
\newtheorem{Hyp}{Hypothesis}
\newtheorem{Not}{Notation}
\theoremstyle{remark}
\newtheorem{Rem}{Remark}
\newtheorem{Example}{Example}
\newcommand{\R}{\mathbb{R}}
\renewcommand{\doteq}{:=}
\def\<#1\>{\left\langle#1\right\rangle }
\begin{document}

\title[The Threshold between Effective and Noneffective Damping for Semilinear Waves]
          {The Threshold between Effective and Noneffective \\
          Damping for Semilinear Wave Equations}
\author[M. D'Abbicco]{Marcello D'Abbicco}

\address{Marcello D'Abbicco, Department of Mathematics, University of Bari, Via E. Orabona 4 - 70125 BARI - ITALY}

\begin{abstract}
In this paper we study the global existence of small data solutions to the Cauchy problem
\[ u_{tt}-\triangle u + \frac\mu{1+t}\,u_t = f(t,u) \,, \qquad u(0,x)=u_0(x)\,, \quad u_t(0,x)=u_1(x)\,,\]
where~$\mu\geq2$. We obtain estimates for the solution and its energy with the same decay rate of the linear problem. We extend our results to a model with polynomial speed of propagation, and to a model with an exponential speed of propagation and a constant damping~$\nu \, u_t$.
\end{abstract}
\keywords{semilinear equations, global existence, damped waves, scale-invariant, propagation speed, critical exponent}
\subjclass[2010]{35L71 Semilinear second-order hyperbolic equations}
\maketitle
\baselineskip=15pt


\section{Introduction}\label{sec:Main}

The classical semilinear damped wave equation
\begin{equation}
\label{eq:classic}
\begin{cases}
u_{tt} - \triangle u + u_t= f(u), \qquad t\geq0\,, \ x\in\R^n\,,\\
u(0,x)=u_0(x)\,, \\
u_t(0,x)=u_1(x)\,,
\end{cases}
\end{equation}
has been deeply investigated. In particular, if we assume small, compactly supported data, then by using some linear decay estimates~\cite{Matsu} one can prove that there exists a global solution to~\eqref{eq:classic} if~$p>1+2/n$, and~$p\leq 1+2/(n-2)$ if~$n\geq3$ (see~\cite{TY}). This exponent is \emph{critical}, that is, for suitable nontrivial, arbitrarily small data and~$f(u)=|u|^p$ with~$1<p\leq 1+2/n$, there exists no global solution to~\eqref{eq:classic} (see~\cite{TY, Z}).
\\
If one removes the compactness assumption on the data, still one may obtain global existence for~$p>1+2/n$ if the data are small in the norm of the energy space~$(H^1\times L^2)$ and in the~$L^1$ norm in space dimension~$n=1,2$ (see~\cite{IMN}). In space dimension~$n\geq3$ the compactness assumption on the data may be replaced by assuming that the data are small in the energy space with a suitable weight~\cite{IT}.
\\
On the other hand, weakening the assumption of smallness replacing the $L^1$ norm of the data with the $L^m$ norm for some~$m\in(1,2)$, the \emph{critical} exponent becomes~$1+2m/n$ (see~\cite{IO}). In particular, one obtains~$1+4/n$ if the smallness is only taken in the energy space, without additional~$L^m$ regularity or compact support assumption. The same exponent was first obtained in~\cite{NO} by using a \emph{modified potential well} technique.

It has been recently proved~\cite{DALR} that the exponent~$1+2/n$ remains \emph{critical} if we consider the wave equation with a time-dependent \emph{effective} damping $b(t)u_t$ satisfying suitable assumptions. We say that the damping term is \emph{effective} for the wave equation if the linear estimates have the same decay rate of the corresponding heat equation $b(t)u_t-\triangle u=0$ (see~\cite{W05, W07, Ya, Ya1}). In fact, the exponent~$1+2/n$ was first proved to be critical by Fujita for the semilinear heat equation~\cite{Fuj}.
\\
In the special case~$b(t)=\mu\,(1+t)^{-k}$, the dissipation is \emph{effective} for any~$\mu>0$, if~$|\kappa|<1$. In this special case, a global existence result has been obtained in~\cite{LNZ, N}. On the other hand, if~$b(t)$ is a sufficiently smooth function satisfying~$\limsup_{t\to\infty}tb(t)<1$ then the dissipation is \emph{non effective}~\cite{W06}. The case~$b(t)=\mu(1+t)^{-1}$ with~$\mu\geq1$ is more difficult to manage, since the dissipation is \emph{effective} for large~$\mu$ and \emph{noneffective} for small~$\mu$. The precise threshold depends on which type of estimate one is studying.
\\
Completely different effects appear if one consider a space-dependent damping term~\cite{ITY, ITY11, Nx} or a time-space dependent damping term~\cite{LNZ11, Wakatx}; in this case the exponent for the global existence changes accordingly to the decay in the space variable.

\bigskip

In this paper, we consider the Cauchy problem
\begin{equation}
\label{eq:diss1}
\begin{cases}
u_{tt}-\Delta u+ \frac\mu{1+t} \, u_t= f(t,u), & t\geq0, \ x\in\R^n\,,\\
u(0,x)=u_0(x)\,, \\
u_t(0,x)=u_1(x)\,.
\end{cases}
\end{equation}
%
%
\begin{Hyp}\label{Hyp:1fu}
We assume that
\begin{equation}\label{eq:1disscontr}
f(t,0)=0\,, \qquad \text{and} \quad |f(t,u)-f(t,v)|\lesssim (1+t)^\gamma\,|u-v|(|u|+|v|)^{p-1} \,,
\end{equation}
for some~$\gamma\geq-2$ and~$p>1$, satisfying~$p\leq 1+2/(n-2)$ if~$n\geq3$.
\end{Hyp}
\begin{Not}\label{Not:1}
We will use the following notation.
\begin{itemize}
\item We say that there exists \emph{a solution to}~\eqref{eq:diss1}, if there exists a unique
\[ u\in\mathcal{C}([0,\infty),H^1)\cap\mathcal{C}^1([0,\infty), L^2)\,,\]
global solution to~\eqref{eq:diss}, in a weak sense.
\item We refer to
\[ \|(\nabla u,u_t)(t,\cdot)\|_{L^2}^2 \doteq \|\nabla u(t,\cdot)\|_{L^2}^2 + \|u_t(t,\cdot)\|_{L^2}^2\,, \]
as the energy of the solution to~\eqref{eq:diss1}.
\item For any~$m\in[1,2)$ we define
\[ \mathcal{D}_m\doteq (L^m\cap H^1) \times (L^m\cap L^2)\,, \qquad \|(u,v)\|_{\mathcal{D}_m}^2 \doteq \|u\|_{L^m}^2+\|u\|_{H^1}^2+\|v\|_{L^m}^2+\|v\|_{H^1}^2 \,.\]
\end{itemize}
\end{Not}
%
%
For the ease of reading, we collect our main results them in three separate theorems.
\begin{Thm}\label{Thm:1L2}
Let~$n\geq1$, $\mu\geq2$ and~$p>1+2(2+\gamma)/n$. Then there exists~$\epsilon>0$ such that for any initial data
\begin{equation}\label{eq:dataL2}
(u_0,u_1)\in H^1\times L^2\,, \qquad \text{satisfying}\quad \|(u_0,u_1)\|_{H^1\times L^2}\leq\epsilon\,,
\end{equation}
there exists a solution to~\eqref{eq:diss1}. Moreover, the solution and its energy satisfy the estimates
\begin{align}
\label{eq:1uL2}
\|u(t,\cdot)\|_{L^2} & \lesssim \, \|(u_0,u_1)\|_{H^1\times L^2} \,,\\
\label{eq:1enL2}
\|(\nabla u,u_t)(t,\cdot)\|_{L^2} & \lesssim (1+t)^{-1} \, \|(u_0,u_1)\|_{H^1\times L^2} \,.
\end{align}
\end{Thm}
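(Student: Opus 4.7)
The plan is a small-data contraction argument in a Banach space whose norm matches the decay rates~\eqref{eq:1uL2}--\eqref{eq:1enL2}. Let $X(T)$ consist of $u\in\mathcal{C}([0,T],H^1)\cap\mathcal{C}^1([0,T],L^2)$ equipped with
\[ \|u\|_{X(T)} \doteq \sup_{t\in[0,T]}\left\{\|u(t,\cdot)\|_{L^2}+(1+t)\,\|(\nabla u,u_t)(t,\cdot)\|_{L^2}\right\}. \]
By Duhamel's principle, a solution is a fixed point of $\Phi(u)\doteq u^{\lin}+u^{\nl}[u]$, where $u^{\lin}$ solves the homogeneous linear problem with data $(u_0,u_1)$ and $u^{\nl}[u]$ solves the inhomogeneous linear problem with zero data and source $f(s,u(s,\cdot))$; the goal is to produce a unique fixed point in a small ball of $X(T)$, with constants independent of $T$.

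For the linear estimates I would use the classical substitution $w(t,x)\doteq(1+t)^{\mu/2}u(t,x)$, turning the equation into
\[ w_{tt}-\Delta w+\frac{\mu(\mu-2)}{4(1+t)^2}\,w=(1+t)^{\mu/2}\,g(t,x), \]
where $g$ is the source of the inhomogeneous $u$-equation. For $\mu\geq2$ the potential $V\doteq\mu(\mu-2)/(4(1+t)^2)$ is non-negative and non-increasing, so a standard energy argument controls $\|w_t(t,\cdot)\|_{L^2}+\|\nabla w(t,\cdot)\|_{L^2}$ by the initial energy plus $\int_0^t(1+s)^{\mu/2}\|g(s,\cdot)\|_{L^2}\,ds$, and Minkowski's inequality in time gives $\|w(t,\cdot)\|_{L^2}\leq\|w(0,\cdot)\|_{L^2}+\int_0^t\|w_s(s,\cdot)\|_{L^2}\,ds$. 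Reverting to $u$ produces
\begin{align*}
\|u(t,\cdot)\|_{L^2} & \lesssim \|(u_0,u_1)\|_{H^1\times L^2}+\int_0^t(1+s)\,\|g(s,\cdot)\|_{L^2}\,ds, \\
(1+t)\,\|(\nabla u,u_t)(t,\cdot)\|_{L^2} & \lesssim \|(u_0,u_1)\|_{H^1\times L^2}+\int_0^t(1+s)\,\|g(s,\cdot)\|_{L^2}\,ds,
\end{align*}
uniformly in $\mu\geq2$; the borderline case $\mu=2$ is the decisive one, since the potential vanishes and the linear growth of $\|w\|_{L^2}$ is cancelled exactly by the factor $(1+t)^{-1}$ in $u=(1+t)^{-\mu/2}w$.

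For the nonlinear step, Hypothesis~\ref{Hyp:1fu} and the Gagliardo--Nirenberg inequality $\|v\|_{L^{2p}}\lesssim\|v\|_{L^2}^{1-\theta}\|\nabla v\|_{L^2}^\theta$ with $\theta=n(p-1)/(2p)$ (admissible precisely because $p\leq 1+2/(n-2)$ for $n\geq3$) yield
\[ \|f(s,u(s,\cdot))\|_{L^2}\lesssim(1+s)^\gamma\,\|u(s,\cdot)\|_{L^{2p}}^p\lesssim(1+s)^{\gamma-n(p-1)/2}\,\|u\|_{X(T)}^p. \]
Inserting this into the linear estimates reduces the contraction to the uniform boundedness in $t$ of $\int_0^t(1+s)^{1+\gamma-n(p-1)/2}\,ds$, which holds if and only if $p>1+2(2+\gamma)/n$, exactly the hypothesis of the theorem. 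The analogous Lipschitz bound for $\Phi(u)-\Phi(v)$ is obtained identically, so a standard fixed-point argument on a ball of radius $C\epsilon$ of $X(T)$, for $\epsilon$ sufficiently small, provides a unique solution; since the constants are independent of $T$, passing to the limit $T\to\infty$ yields a global solution satisfying~\eqref{eq:1uL2}--\eqref{eq:1enL2}.

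The main obstacle is the linear step: since no $L^1$ smallness is assumed on the data, Matsumura-type $L^p$--$L^q$ bounds are not available, and one must exploit the algebraic structure of the scale-invariant damping directly. The threshold $\mu\geq 2$ is exactly what makes the transformed potential non-negative and the free-wave energy method applicable; the non-decay of $\|u\|_{L^2}$ in~\eqref{eq:1uL2} is then the sharp consequence of the linear growth of $\|w\|_{L^2}$ against the weight $(1+t)^{-\mu/2}$. Correspondingly, the exponent $1+2(2+\gamma)/n$ reduces, for $\gamma=0$, to the threshold $1+4/n$ known for the classical damped wave equation without $L^1$ smallness, adjusted by the time weight of the nonlinearity.
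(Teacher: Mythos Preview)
Your overall architecture --- Duhamel's principle, contraction in a space whose norm encodes~\eqref{eq:1uL2}--\eqref{eq:1enL2}, and Gagliardo--Nirenberg applied to~$\|u\|_{L^{2p}}^p$ --- is exactly what the paper does in Section~\ref{sec:well}, and your nonlinear computation leading to the integrability condition~$p>1+2(2+\gamma)/n$ is correct.

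The gap is in the linear step. The Liouville substitution~$w=(1+t)^{\mu/2}u$ turns the equation into
\[
w_{tt}-\Delta w-\frac{\mu(\mu-2)}{4(1+t)^2}\,w=(1+t)^{\mu/2}g\,,
\]
with a \emph{minus} sign in front of the mass term; for~$\mu>2$ the potential is therefore \emph{negative}, not non-negative. This is not a cosmetic slip: with a negative, decreasing potential the standard energy identity gives
\[
\frac{d}{dt}\Bigl[\tfrac12\|w_t\|_{L^2}^2+\tfrac12\|\nabla w\|_{L^2}^2-\tfrac12 V(t)\|w\|_{L^2}^2\Bigr]=-\tfrac12 V'(t)\|w\|_{L^2}^2+\int g\,(1+t)^{\mu/2}w_t\,,
\]
and since~$-V'\geq0$ this yields no upper bound on~$\|w_t\|_{L^2}+\|\nabla w\|_{L^2}$. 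In fact the zero-frequency mode already shows that the bound you claim is false: for~$\xi=0$ the ODE~$w''-\mu(\mu-2)/(4(1+t)^2)\,w=0$ is Euler's equation with a fundamental solution~$(1+t)^{\mu/2}$, so~$\|w_t\|_{L^2}$ can grow like~$(1+t)^{\mu/2-1}$ and is \emph{not} controlled by the initial energy when~$\mu>2$. Your argument survives only at the exact threshold~$\mu=2$, where the potential vanishes and~$w$ solves the free wave equation.

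The paper avoids this obstruction entirely: it works on the Fourier side, where~$\hat v$ satisfies a Bessel-type ODE, and obtains the~$L^2$--$L^2$ multiplier bounds of Lemma~\ref{Lem:CPlin2} from the explicit representation via Hankel functions~$\mathcal H_\rho^\pm$ with~$\rho=(1-\mu)/2$. These pointwise bounds on~$\Phi_j$ and~$|\xi|^k\partial_t^l\Phi_j$ in the three frequency zones~$I_1,I_2,I_3$ are what deliver~\eqref{eq:decayv2}--\eqref{eq:decayenv2} uniformly in~$\mu\geq2$. If you want to keep a physical-space energy method, you would need either a modified multiplier (e.g.\ $w_t+\beta w/(1+t)$) tuned to absorb the negative potential, or to leave part of the damping in place rather than eliminating it; neither reduces to the ``standard energy argument'' you invoke.
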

%
%
\begin{Thm}\label{Thm:1L1}
Let~$n\leq 4$, $\mu \geq n+2$ and
\[ p>1+(2+\gamma)/n\,,\]
if~$\gamma\geq n-2$, or~$p\geq2$ otherwise. Then there exists~$\epsilon>0$ such that for any initial data
\begin{equation}\label{eq:dataL1}
(u_0,u_1)\in \mathcal{D}_1\,, \qquad \text{satisfying}\quad \|(u_0,u_1)\|_{\mathcal{D}_1}\leq\epsilon\,,
\end{equation}
there exists a solution to~\eqref{eq:diss1}. Moreover, the solution and its energy satisfy the decay estimates
\begin{align}
\label{eq:1uL1}
\|u(t,\cdot)\|_{L^2}
    & \lesssim (1+t)^{-\frac{n}2} \, \|(u_0,u_1)\|_{\mathcal{D}_1} \,,\\
\label{eq:1enL1}
\|(\nabla u,u_t)(t,\cdot)\|_{L^2}
    & \lesssim \begin{cases}
    (1+t)^{-\frac{n}2-1} \, \|(u_0,u_1)\|_{\mathcal{D}_1} & \text{if~$\mu>n+2$,}\\
    (1+t)^{-\frac\mu2}\log(e+t) \, \|(u_0,u_1)\|_{\mathcal{D}_1} & \text{if~$\mu=n+2$.}
    \end{cases}
\end{align}
\end{Thm}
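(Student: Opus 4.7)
The plan is to apply Banach's fixed-point theorem to the Duhamel operator of the linear scale-invariant problem. Define the Banach space
\[ X \doteq \{ u \in \mathcal{C}([0,\infty), H^1) \cap \mathcal{C}^1([0,\infty), L^2) \,:\, \|u\|_X < \infty \}, \]
\[ \|u\|_X \doteq \sup_{t \geq 0} \Big\{ (1+t)^{\frac{n}{2}} \|u(t,\cdot)\|_{L^2} + \phi(t) \|(\nabla u, u_t)(t,\cdot)\|_{L^2} \Big\}, \]
where $\phi(t) = (1+t)^{\frac{n}{2}+1}$ when $\mu > n+2$ and $\phi(t) = (1+t)^{\frac{\mu}{2}} / \log(e+t)$ when $\mu = n+2$, so that boundedness in $X$ encodes exactly the rates \eqref{eq:1uL1}--\eqref{eq:1enL1}. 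Writing
\[ \Phi(u)(t,\cdot) \doteq u^\lin(t,\cdot) + \int_0^t E_1(t,s) *_x f(s, u(s,\cdot))\, ds, \]
with $u^\lin$ the free linear solution of \eqref{eq:diss1} with data $(u_0, u_1)$ and $E_1(t,s)$ the propagator with Dirac initial velocity at time $s$, the proof reduces to establishing
\[ \|\Phi(u)\|_X \lesssim \|(u_0, u_1)\|_{\mathcal{D}_1} + \|u\|_X^p, \qquad \|\Phi(u) - \Phi(v)\|_X \lesssim \bigl(\|u\|_X^{p-1} + \|v\|_X^{p-1}\bigr) \|u - v\|_X, \]
after which a standard contraction argument on a small closed ball of $X$ produces the unique solution.

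The linear bound $\|u^\lin\|_X \lesssim \|(u_0, u_1)\|_{\mathcal{D}_1}$ is the $\mathcal{D}_1$-decay estimate established in the linear section of the paper, where the assumption $\mu \geq n+2$ and the logarithmic loss at $\mu = n+2$ originate; the same estimate, applied with $f(s,\cdot)$ in place of the initial data, controls the Duhamel integrand. Hypothesis~\ref{Hyp:1fu} with $v = 0$ gives $|f(s,u)| \lesssim (1+s)^\gamma |u|^p$ and hence $\|f(s,u(s,\cdot))\|_{L^m} \lesssim (1+s)^\gamma \|u(s,\cdot)\|_{L^{mp}}^p$ for $m \in \{1, 2\}$, while Gagliardo--Nirenberg combined with $u \in X$ yields
\[ \|u(s,\cdot)\|_{L^q} \lesssim \|u\|_X \, (1+s)^{-n(1-1/q)}, \qquad 2 \le q \le \tfrac{2n}{n-2}, \]
(with the usual modification for $n \leq 2$); the endpoints $q = p$ and $q = 2p$ both lie in this range precisely under the joint restriction $n \leq 4$ and $p \leq 1 + 2/(n-2)$. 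One obtains
\[ \|f(s,u)\|_{L^1} \lesssim \|u\|_X^p (1+s)^{\gamma - n(p-1)}, \qquad \|f(s,u)\|_{L^2} \lesssim \|u\|_X^p (1+s)^{\gamma - n(p-1) - n/2}. \]
Inserting these into the Duhamel integral, splitting at $s = t/2$, and using the $\mathcal{D}_1$-based linear estimate on $[0, t/2]$ together with the $L^2$-based estimate on $[t/2, t]$, the required bounds reduce to the convergence of two elementary power-type time integrals; the extra weight $(1+s)$ arising from the scale-invariant structure of the linear propagator is what produces the critical threshold $p > 1 + (2+\gamma)/n$. The Lipschitz estimate follows along identical lines from the full bound \eqref{eq:1disscontr}.

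Two technical issues deserve comment. First, when $\gamma < n-2$ the Fujita-type threshold $1 + (2+\gamma)/n$ drops below $2$ and Gagliardo--Nirenberg can no longer control $\|u\|_{L^p}$ on the critical range; for such $\gamma$ one must therefore strengthen the hypothesis to $p \geq 2$, which combined with $p \leq 1 + 2/(n-2)$ is compatible only when $n \leq 4$ and so accounts for the dimensional restriction. Second, in the borderline case $\mu = n+2$ the linear energy already carries a factor $\log(e+t)$, and the main difficulty is to carry this logarithm through the Duhamel argument without amplification. The extra decay $(1+s)^{-n/2}$ of $\|f(s,u)\|_{L^2}$ over $\|f(s,u)\|_{L^1}$ provides enough margin to absorb any logarithmic loss coming from Gagliardo--Nirenberg on $u$, so that only a single logarithm---the one inherited from the linear propagator---survives in the final estimate \eqref{eq:1enL1}.
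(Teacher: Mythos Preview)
Your proposal is correct and follows essentially the same route as the paper: Duhamel representation, contraction on a weighted solution space encoding the target decay rates, the linear $(L^1\cap L^2)\to L^2$ estimates of Lemma~\ref{Lem:CPlinm} (together with Lemma~\ref{Lem:CPlin2}), and Gagliardo--Nirenberg with $q=p$ and $q=2p$ to control $\|f(s,u)\|_{L^1}$ and $\|f(s,u)\|_{L^2}$.

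Two small remarks. First, the split at $s=t/2$ is unnecessary: the paper uses the single estimate~\eqref{eq:decayvm} (and \eqref{eq:decayenvm}) on all of $[0,t]$, and observes that after Gagliardo--Nirenberg the $L^1$ and $L^2$ pieces of the integrand carry the \emph{same} $s$-exponent $1+\gamma-n(p-1)$, so one integral handles both; your split works but adds bookkeeping. Second, your explanation of why the logarithm does not amplify when $\mu=n+2$ is slightly off: it is not the extra $(1+s)^{-n/2}$ in $\|f\|_{L^2}$ that absorbs the log (that gap is exactly cancelled by the $\Lambda(s)^{n/2}$ weight in the linear estimate), but simply the strict inequality $p>1+(2+\gamma)/n$, which makes the $s$-integrand decay like $(1+s)^{1+\gamma-n(p-1)}$ with exponent strictly below $-1$, so any fixed power of $\log(e+s)$ coming from Gagliardo--Nirenberg is harmlessly integrated.
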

The exponent~$1+(2+\gamma)/n$ in Theorem~\ref{Thm:1L1} can be proved to be \emph{critical} by using a \emph{modified} test function method, that is, there exists no global solution to~\eqref{eq:diss1} if~$p\leq 1+(2+\gamma)/n$, for suitable data, arbitrarily small in~$\mathcal{D}_1$ (see Example~2 in~\cite{DAL}). 

Theorem~\ref{Thm:1L1} is a special case of the following.
\begin{Thm}\label{Thm:1Lm}
Let~$m\in[1,2)$, $n\leq 4/(2-m)$,
\begin{align}
\label{eq:mu}
\mu
	& \geq 2+ n\left(\frac2m-1\right)\,, \quad \text{and} \\
\label{eq:p}
p
    & > 1+\frac{m(2+\gamma)}n \,,
\end{align}
if~$\gamma+2\geq n(2-m)/m^2$, or $p\geq 2/m$ otherwise. Then there exists~$\epsilon>0$ such that for any initial data
\begin{equation}\label{eq:dataLm}
(u_0,u_1)\in \mathcal{D}_m\,, \qquad \text{satisfying}\quad \|(u_0,u_1)\|_{\mathcal{D}_m}\leq\epsilon\,,
\end{equation}
there exists a solution to~\eqref{eq:diss1}. Moreover, the solution and its energy satisfy the decay estimates
\begin{align}
\label{eq:1uLm}
\|u(t,\cdot)\|_{L^2}
    & \lesssim \,(1+t)^{-n\left(\frac1m-\frac12\right)} \, \|(u_0,u_1)\|_{\mathcal{D}_m} \,,\\
\label{eq:1enLm}
\|(\nabla u,u_t)(t,\cdot)\|_{L^2}
    & \lesssim \begin{cases}
    (1+t)^{-n\left(\frac1m-\frac12\right)-1} \, \|(u_0,u_1)\|_{\mathcal{D}_m} & \text{if~$\mu>2+n(2/m-1)$,} \\
    (1+t)^{-\frac\mu2}\log(e+t) \, \|(u_0,u_1)\|_{\mathcal{D}_m} & \text{if~$\mu=2+n(2/m-1)$.}
    \end{cases}
\end{align}
\end{Thm}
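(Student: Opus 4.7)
The plan is to prove Theorem~\ref{Thm:1Lm} by a standard contraction argument for Duhamel's formula. First, I would fix $T>0$ and introduce the solution space
\[ X(T)\doteq \mathcal{C}([0,T],H^1)\cap \mathcal{C}^1([0,T],L^2)\,,\]
normed so as to encode precisely the decay rates in \eqref{eq:1uLm}--\eqref{eq:1enLm}:
\[ \|u\|_{X(T)} \doteq \sup_{t\in[0,T]}\Bigl[(1+t)^{n(\frac1m-\frac12)}\|u(t,\cdot)\|_{L^2} + \phi(t)^{-1}\|(\nabla u,u_t)(t,\cdot)\|_{L^2}\Bigr]\,,\]
with $\phi(t)=(1+t)^{-n(1/m-1/2)-1}$ if $\mu>2+n(2/m-1)$ and $\phi(t)=(1+t)^{-\mu/2}\log(e+t)$ in the borderline case. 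Writing $u=u^{\lin}+u^{\nl}$ where $u^{\lin}$ solves the linear problem with the given data and
\[ u^{\nl}(t,x)\doteq \int_0^t E_1(t,s,x)*_x f(s,u(s,\cdot))\,ds \]
involves the propagator $E_1(t,s,\cdot)$ of \eqref{eq:diss1}, I would define $\Phi(u)\doteq u^{\lin}+u^{\nl}$ and show that $\Phi$ stabilises a small closed ball of $X(T)$ and is a contraction there uniformly in $T$, which delivers the unique global solution.

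The linear part is handled by the effective/heat-like $L^m\to L^2$ decay estimates available under the threshold assumption \eqref{eq:mu}, yielding $\|u^{\lin}\|_{X(T)}\lesssim \|(u_0,u_1)\|_{\mathcal{D}_m}$; these estimates (including the logarithmic loss at $\mu=2+n(2/m-1)$) are standard for the scale-invariant damping and have to be stated/cited earlier in the paper. For the nonlinear part, from \eqref{eq:1disscontr} with $v=0$ I get $\|f(s,u)\|_{L^q}\lesssim (1+s)^\gamma\,\|u(s,\cdot)\|_{L^{qp}}^{p}$; Gagliardo--Nirenberg then controls $\|u(s,\cdot)\|_{L^{qp}}$ by $\|u\|_{X(T)}$ with decay $(1+s)^{-n(1/m-1/(qp))}$ provided $qp\in[2,2n/(n-2)]$. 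The lower bound $qp\ge 2$ is exactly why either $p>1+m(2+\gamma)/n$ (which forces $p\ge 2/m$ when $2+\gamma\ge n(2-m)/m^2$) or the auxiliary assumption $p\ge 2/m$ is required, while the upper bound $qp\le 2n/(n-2)$ is guaranteed by $n\le 4/(2-m)$ together with $p\le 1+2/(n-2)$. Taking $q=m$ and $q=2$ produces bounds for $\|f(s,u)\|_{L^m}$ and $\|f(s,u)\|_{L^2}$ with explicit decay rates in $s$.

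Substituting these bounds into $u^{\nl}$ and splitting the integral into $[0,t/2]$ (where one exploits the full $L^m\cap L^2\to L^2$ gain) and $[t/2,t]$ (where one only keeps the $L^2\to L^2$ estimate, taking advantage of the decay from $s$ to $t$ supplied by $\mu$), the Fujita-type condition \eqref{eq:p} is precisely what makes the $[0,t/2]$-integral integrable with the prescribed decay, while the lower bound \eqref{eq:mu} on $\mu$ is what keeps the $[t/2,t]$-integral compatible with the claimed energy decay (with the extra logarithm appearing exactly at equality). The contraction property follows verbatim by replacing $|u|^p$ with $|u-v|(|u|+|v|)^{p-1}$ and applying H\"older, again exploiting Hypothesis~\ref{Hyp:1fu}.

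The main obstacle I anticipate is the borderline case $\mu=2+n(2/m-1)$: there the $[t/2,t]$-piece produces exactly a logarithmically divergent Duhamel integral that has to be absorbed into the weight $\phi(t)$, and one must check carefully that the gain from $p>1+m(2+\gamma)/n$ still beats this logarithm. A secondary delicate point is the regime $\gamma+2<n(2-m)/m^2$, where the Fujita exponent is too small to guarantee $mp\ge 2$ and one is forced instead to invoke $p\ge 2/m$ to keep the Gagliardo--Nirenberg interpolation admissible; checking that the two cases match up continuously along the Duhamel estimate is where the bookkeeping is heaviest. Everything else (continuity in time, measurability, and the contraction constant being small for small data) is routine once these estimates are in place.
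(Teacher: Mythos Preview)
Your overall strategy is correct and matches the paper's: a contraction for the Duhamel operator on a space $X(T)$ whose norm encodes the target decay, with the nonlinearity controlled via Gagliardo--Nirenberg at the exponents $mp$ and $2p$. Your reading of why $p\ge 2/m$ and $n\le 4/(2-m)$ are needed is also right.

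There is, however, one unnecessary complication in your plan. You propose splitting the Duhamel integral into $[0,t/2]$ and $[t/2,t]$, using different linear estimates on each piece. The paper avoids this entirely. The point is that for the scale-invariant damping the $(L^m\cap L^2)\to L^2$ estimate for the propagator $E_1(t,s,\cdot)$ can be written with the full $t$-decay factored out: schematically,
\[
\|E_1(t,s,\cdot)\ast g\|_{L^2} \lesssim (1+t)^{-n(\frac1m-\frac12)}\Bigl\{ (1+s)\,\|g\|_{L^m} + (1+s)^{1+n(\frac1m-\frac12)}\,\|g\|_{L^2}\Bigr\},
\]
and similarly for the energy. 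After inserting the Gagliardo--Nirenberg bounds for $\|f(s,u)\|_{L^m}$ and $\|f(s,u)\|_{L^2}$, the two $s$-integrands carry the \emph{same} power of $(1+s)$, namely $(1+s)^{1+\gamma-\frac{n}{m}(p-1)}$, which is integrable on $[0,\infty)$ exactly under \eqref{eq:p}. No splitting is needed, and the ``main obstacle'' you anticipate at $\mu=2+n(2/m-1)$ essentially evaporates: one simply swaps in the logarithmic variant of the energy estimate, and the same convergent $s$-integral controls everything. Your $[0,t/2]$--$[t/2,t]$ route would also succeed, but with noticeably heavier bookkeeping and no payoff.
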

\begin{Rem}\label{Rem:range}
We recall that in space dimension~$n\geq3$ we assumed~$p\leq 1+2/(n-2)$ in Hypothesis~\ref{Hyp:1fu}.

For~$n\geq3$, the set~$(1+2(2+\gamma)/n, 1+2/(n-2)]$ of the global existence in Theorem~\ref{Thm:1L2} is nonempty if, and only if, either $\gamma\in[-2,-1]$, or~$\gamma\in(-1,1)$ and $n<2(2+\gamma)/(1+\gamma)$. 

For~$n=3$, the range of admissible exponents~$p$ for the global existence in Theorem~\ref{Thm:1L1} is nonempty if, and only if, $\gamma<4$. We have the range~$(1+(2+\gamma)/3,3]$ if~$\gamma\in[1,4)$, and the range~$[2,3]$ if~$\gamma\in[-2,1)$. For~$n=4$ we only have the admissible exponent~$p=2$, provided that~$\gamma<2$.

More in general, for any~$m\in[1,2)$ there exists~$\overline{n}=\overline{n}(m,\gamma)\geq3$ such that the range of admissible exponents is empty for~$n\geq \overline{n}$. If~$\gamma\in[-2,-1]$ then $\overline{n}(m,\gamma)\to\infty$ as~$m\to2$.
\end{Rem}
\begin{Rem}\label{Rem:comparison}
Let us assume~$\mu\geq n+2$ and let the data verify condition~\eqref{eq:dataL1}. We may compare Theorems~\ref{Thm:1L2}, \ref{Thm:1L1} and~\ref{Thm:1Lm}, looking for the largest range of admissible exponents~$p$. Indeed, due to the bound~$p\geq2$ in Theorem~\ref{Thm:1L1}, we may get benefit by applying Theorem~\ref{Thm:1Lm} for some~$m\in(1,2)$, or even Theorem~\ref{Thm:1L2}.
\\
Let us fix~$n\geq1$. If~$\gamma\geq n-2$, then the range in Theorem~\ref{Thm:1L1} cannot be further improved, i.e we get
\[ p \in \begin{cases}
(1+(2+\gamma)/n,\infty) & \text{if~$n=1,2$ and~$\gamma\geq n-2$,}\\
(1+(2+\gamma)/3,3] & \text{if~$n=3$ and~$\gamma\in[3,4)$.}
\end{cases}\]
If~$\gamma\in(-2,n-2)$, let~$m\in(1,2)$ be the largest solution to
\[ \left(\frac{2+\gamma}n\right) m^2+m-2 = 0 \,. \]
In correspondence of this~$m=m(n,\gamma)$, we obtain the range in Theorem~\ref{Thm:1Lm}, i.e. either $p>(1+(2+\gamma)m/n$ if~$n=1,2$ or $p\in(1+(2+\gamma)m/n,1+2/(n-2)]$, for any~$n\geq3$ which makes the interval nonempty.
\\
Finally, if~$\gamma=-2$ we obtain either the range~$p>1$ if~$n=1,2$, or the range $p\in(1,1+2/(n-2))$ if~$n\geq3$, by applying Theorem~\ref{Thm:1L2}.
\end{Rem}
If~$\mu\in(2,n+2)$, we may apply Theorem~\ref{Thm:1Lm} only for~$m\in [\ell,2)$, where
\begin{equation}\label{eq:ell}
\ell=\ell(n,\mu) \doteq \frac{2n}{n+\mu-2} \,.
\end{equation}
In particular, setting~$m=\ell$ we immediately have the following.
\begin{Cor}\label{Cor:1ell}
Let~$n\geq1$ and $\mu\in(2,2+n)$, and let us assume
\begin{align}
\label{eq:pell}
p
    & > 1+\frac{2(2+\gamma)}{n+\mu-2} \,,\\
\nonumber
\text{if} \quad \gamma
    & \geq \frac{(\mu-2)(n+\mu-2)}{2n} \, -2\,,
\end{align}
or $p\geq 1 + (\mu-2)/n$ otherwise. Let~$\ell=\ell(n,\mu)$ be defined as in~\eqref{eq:ell}. Then there exists~$\epsilon>0$ such that for any initial data
\begin{equation}\label{eq:dataLell}
(u_0,u_1)\in \mathcal{D}_\ell\,, \qquad \text{satisfying} \quad \|(u_0,u_1)\|_{\mathcal{D}_\ell}\leq\epsilon\,,
\end{equation}
there exists a solution to~\eqref{eq:diss1}. Moreover, the solution and its energy satisfy the decay estimates 
\begin{align}
\label{eq:1uell}
\|u(t,\cdot)\|_{L^2}
    & \lesssim (1+t)^{-(\frac\mu2-1)} \, \|(u_0,u_1)\|_{\mathcal{D}_\ell} \,,\\
\label{eq:1enell}
\|(\nabla u,u_t)(t,\cdot)\|_{L^2}
    & \lesssim (1+t)^{-\frac\mu2}\log(e+t) \, \|(u_0,u_1)\|_{\mathcal{D}_\ell} \,.
\end{align}
\end{Cor}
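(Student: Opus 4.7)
The corollary is an instance of Theorem~\ref{Thm:1Lm} obtained by specializing $m$ to the endpoint $m=\ell=2n/(n+\mu-2)$ of the admissible $L^m$ range. My plan is therefore to verify that under this substitution every hypothesis of Theorem~\ref{Thm:1Lm} translates into one appearing in the corollary, and that the decay rates collapse onto those claimed in~\eqref{eq:1uell}--\eqref{eq:1enell}.

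First I would check that $\ell\in(1,2)$ whenever $\mu\in(2,2+n)$: indeed $\ell<2$ iff $\mu>2$, and $\ell>1$ iff $\mu<n+2$. The crucial algebraic observation is that with $m=\ell$ we have $2/m-1=(\mu-2)/n$, so $2+n(2/m-1)=\mu$. This means the bound~\eqref{eq:mu} is saturated with equality, which is precisely why the second (logarithmic) branch of~\eqref{eq:1enLm} applies and produces~\eqref{eq:1enell}. The same identity gives $n(1/m-1/2)=\mu/2-1$, which turns the $L^2$ estimate~\eqref{eq:1uLm} into~\eqref{eq:1uell}.

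Next, using $m=\ell$ I would rewrite the threshold $1+m(2+\gamma)/n$ as $1+2(2+\gamma)/(n+\mu-2)$, recovering~\eqref{eq:pell}; the transition condition $\gamma+2\geq n(2-m)/m^2$ simplifies to $\gamma\geq (\mu-2)(n+\mu-2)/(2n)-2$, matching the corollary; and $2/m=1+(\mu-2)/n$ gives the alternative bound $p\geq 1+(\mu-2)/n$ for the subcritical regime. After these substitutions, the existence statement, together with the decay estimates, follows directly from Theorem~\ref{Thm:1Lm} applied to data in $\mathcal{D}_\ell$ satisfying~\eqref{eq:dataLell}.

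Since every step is a direct parameter substitution, there is no substantive obstacle to overcome: the corollary is essentially a bookkeeping exercise, expressing the conclusion of Theorem~\ref{Thm:1Lm} in the natural variables $(n,\mu,\gamma,p)$ once the $L^m$ scale is tied to the damping strength via $m=\ell(n,\mu)$. The only nontrivial observation is the identification of $m=\ell$ as the endpoint that saturates~\eqref{eq:mu}, which is what forces the logarithmic loss in the energy estimate~\eqref{eq:1enell}.
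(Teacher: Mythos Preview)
Your proposal is correct and matches the paper's approach exactly: the paper states that the corollary follows immediately from Theorem~\ref{Thm:1Lm} by setting~$m=\ell$, and you have carried out precisely the parameter substitutions and algebraic verifications that justify this specialization. Your identification of~$m=\ell$ as the value saturating~\eqref{eq:mu}, hence triggering the logarithmic branch of~\eqref{eq:1enLm}, is the one nontrivial point, and you handle it correctly.
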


\section{Models with time-dependent speed}\label{sec:extension}

More in general, one may investigate on the global existence for a wave equation with time-dependent propagation speed
\begin{equation}
\label{eq:diss}
\begin{cases}
u_{tt}-\lambda(t)^2\triangle u+ b(t) u_t= f(t,u), & t\geq0, \ x\in\R^n\,,\\
u(0,x)=u_0(x)\,, \\
u_t(0,x)=u_1(x)\,,
\end{cases}
\end{equation}
expecting interactions between the speed~$\lambda(t)$ and the damping coefficient~$b(t)$. In this setting, one may still classify the dissipation produced by the damping term in \emph{effective} and \emph{non effective}, with respect to the speed and to the considered estimate (see~\cite{DAEm,DAE}). 
In particular, we are interested in the following two models.
\begin{Example}[Polynomial speed]\label{Ex:pol}
Let~$\lambda(t)=(1+t)^{q-1}$ for some~$q>0$, and~$b(t)=\nu(1+t)^{-1}$ for some~$\nu\in\R$, that is,
\begin{equation}\label{eq:disspol}
\begin{cases}
u_{tt}-(1+t)^{2(q-1)}\triangle u+ \frac\nu{1+t}\, u_t= f(t,u), & t\geq0, \ x\in\R^n\,,\\
u(0,x)=u_0(x)\,, \\
u_t(0,x)=u_1(x)\,.
\end{cases}
\end{equation}
With respect to this model, we will denote~$\Lambda(t)=(1+t)^q/q$, and
\[ \mu=\mu(\nu,q) \doteq \frac{\nu-1}q +1\,. \]
We remark that for~$q=1$ we find again~\eqref{eq:diss1} and~$\nu=\mu$.
\end{Example}
\begin{Example}[Exponential speed]\label{Ex:exp}
Let~$\lambda(t)=e^{rt}$ for some~$r>0$ and~$b=\nu$ for some~$\nu\in\R$, that is,
\begin{equation}\label{eq:dissexp}
\begin{cases}
u_{tt}-e^{2rt}\triangle u+ \nu\, u_t= f(t,u), & t\geq0, \ x\in\R^n\,,\\
u(0,x)=u_0(x)\,, \\
u_t(0,x)=u_1(x)\,.
\end{cases}
\end{equation}
With respect to this model, we will denote~$\Lambda(t)=e^{rt}/r$, and
\[ \mu=\mu(\nu) \doteq \nu +1\,. \]
\end{Example}
To deal with both models in Examples~\ref{Ex:pol} and~\ref{Ex:exp}, we modify the assumption on~$f(t,u)$.
\begin{Hyp}\label{Hyp:fu}
We assume that the nonlinear term in~\eqref{eq:diss} satisfies
\begin{equation}\label{eq:disscontr}
f(t,0)=0\,, \qquad |f(t,u)-f(t,v)|\lesssim \lambda(t)^2\Lambda(t)^\gamma\,|u-v|(|u|+|v|)^{p-1} \,,
\end{equation}
for some~$\gamma\geq-2$ and for a given $p>1$, satisfying~$p\leq 1+2/(n-2)$ if~$n\geq3$.
\end{Hyp}
With the notation in Examples~\ref{Ex:pol} and~\ref{Ex:exp}, the inequality in condition~\eqref{eq:disscontr} may be explicitated by means of the time-dependent speed~$\lambda(t)$ and its anti-derivative~$\Lambda(t)$, giving
\begin{align}
\label{eq:contrpol}
|f(t,u)-f(t,v)|
	& \lesssim (1+t)^{(\gamma+2) q-2}\,|u-v|(|u|+|v|)^{p-1} \,,\\
\label{eq:contrexp}
|f(t,u)-f(t,v)|
	& \lesssim e^{(\gamma+2)rt}\,|u-v|(|u|+|v|)^{p-1} \,.
\end{align}
To state our results, we still use Notation~\ref{Not:1} but now we refer to
\[ \|(\lambda\nabla u,u_t)(t,\cdot)\|_{L^2}^2 \doteq \lambda(t)^2\,\|\nabla u(t,\cdot)\|_{L^2}^2 + \|u_t(t,\cdot)\|_{L^2}^2\,, \]
as the energy of the solution to~\eqref{eq:diss}.
\begin{Thm}\label{Thm:L2}
Let~$n\geq1$, $\mu\geq2$ and~$p>1+2(2+\gamma)/n$. Then there exists~$\epsilon>0$ such that, for any initial data as in~\eqref{eq:dataL2} there exists a solution to~\eqref{eq:diss}. Moreover, the solution and its energy satisfy the estimates
\begin{align}
\label{eq:uL2}
\|u(t,\cdot)\|_{L^2} & \lesssim \|(u_0,u_1)\|_{H^1\times L^2} \,,\\
\label{eq:enL2}
\|(\lambda\nabla u,u_t)(t,\cdot)\|_{L^2} & \lesssim \lambda(t)\,\Lambda(t)^{-1} \, \|(u_0,u_1)\|_{H^1\times L^2} \,.
\end{align}
\end{Thm}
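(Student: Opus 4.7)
The plan is to apply a standard contraction-mapping argument, following closely the structure used for Theorem~\ref{Thm:1L2} but replacing the scale-invariant linear estimates by their time-dependent-speed counterparts. I would work in the weighted solution space
\[
X(T) = \bigl\{ u\in\mathcal{C}([0,T],H^1)\cap\mathcal{C}^1([0,T],L^2) : \|u\|_{X(T)}<\infty\bigr\},
\]
\[
\|u\|_{X(T)} \doteq \sup_{t\in[0,T]} \Bigl[ \|u(t,\cdot)\|_{L^2} + \tfrac{\Lambda(t)}{\lambda(t)}\,\|(\lambda\nabla u,u_t)(t,\cdot)\|_{L^2} \Bigr],
\]
whose norm encodes exactly \eqref{eq:uL2}--\eqref{eq:enL2}, and look for a fixed point of the Duhamel map $\Phi u \doteq u^{\lin}+\mathcal{N}u$, where $u^{\lin}$ solves the homogeneous linear Cauchy problem associated with~\eqref{eq:diss} and
\[
\mathcal{N}u(t,x) \doteq \int_0^t E_1(t,s,x)\ast_x f(s,u(s,\cdot))(x)\,ds,
\]
with $E_1(t,s,\cdot)$ the fundamental solution of the linear equation with source applied at time~$s$.

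The first ingredient is the linear $L^2$ and energy decay for the homogeneous version of~\eqref{eq:diss}: under $\mu\geq 2$ both Examples~\ref{Ex:pol} and~\ref{Ex:exp} fall into the effective regime, and the estimates of~\cite{DAEm,DAE} yield $\|u^{\lin}\|_{X(T)}\lesssim\|(u_0,u_1)\|_{H^1\times L^2}$ uniformly in $T$. The second ingredient is a pointwise-in-time bound on the nonlinear term: from Hypothesis~\ref{Hyp:fu} together with the Gagliardo--Nirenberg inequality
\[
\|u(s,\cdot)\|_{L^{2p}}\lesssim \|u(s,\cdot)\|_{L^2}^{1-\theta}\,\|\nabla u(s,\cdot)\|_{L^2}^{\theta},\qquad \theta=\frac{n(p-1)}{2p}\in[0,1]
\]
(applicable thanks to the bound $p\leq 1+2/(n-2)$ in Hypothesis~\ref{Hyp:fu}), and using the a priori controls $\|u(s,\cdot)\|_{L^2}\lesssim\|u\|_{X(T)}$ and $\|\nabla u(s,\cdot)\|_{L^2}\lesssim\Lambda(s)^{-1}\|u\|_{X(T)}$ built into the norm, I obtain
\[
\|f(s,u(s,\cdot))\|_{L^2}\lesssim \lambda(s)^2\,\Lambda(s)^{\gamma-n(p-1)/2}\,\|u\|_{X(T)}^p.
\]

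The Duhamel contribution is then controlled by treating $(0,f(s,u(s,\cdot)))$ as initial data at time $s$ for the linear problem and applying the inhomogeneous version of the estimates of the first step, so that the $X(T)$-norm of $\mathcal{N}u$ is bounded by an integral of the form $\int_0^t \mathcal{K}(t,s)\,\|f(s,u(s,\cdot))\|_{L^2}\,ds$ for an explicit kernel $\mathcal{K}(t,s)$ depending on $\lambda$ and $\Lambda$ evaluated at $s$ and $t$. Inserting the previous bound and performing the change of variable $\tau=\Lambda(s)$ (so $d\tau=\lambda(s)\,ds$), the resulting integral converges uniformly in $t$ precisely when $p>1+2(2+\gamma)/n$, yielding $\|\mathcal{N}u\|_{X(T)}\lesssim\|u\|_{X(T)}^p$. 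The matching Lipschitz estimate $\|\mathcal{N}u-\mathcal{N}v\|_{X(T)}\lesssim(\|u\|_{X(T)}+\|v\|_{X(T)})^{p-1}\|u-v\|_{X(T)}$ follows from the full contraction condition in~\eqref{eq:disscontr} together with Hölder's inequality; hence $\Phi$ is a contraction on a small ball of $X(T)$ of radius proportional to $\|(u_0,u_1)\|_{H^1\times L^2}$, and the fixed point is the claimed global solution.

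The main obstacle is the analysis of the Duhamel kernel $\mathcal{K}(t,s)$: identifying it and verifying that the threshold is exactly $p>1+2(2+\gamma)/n$ requires a careful tracking of the factors $\lambda,\Lambda$ in the inhomogeneous linear estimates, especially at the critically effective threshold $\mu=2$. A cleaner alternative is the change of variable $\tau=\Lambda(t)$, $w(\tau,x)=u(t,x)$, which transforms~\eqref{eq:diss} into $w_{\tau\tau}-\Delta w + \tfrac{\mu}{\tau}\,w_\tau = \lambda(t)^{-2}f(t,u(t,x))$ with right-hand side dominated by $\tau^{\gamma}|w|^p$ in view of~\eqref{eq:disscontr}; up to a harmless shift in $\tau$ this is an equation of the type~\eqref{eq:diss1}, so that Theorem~\ref{Thm:1L2} applies directly to $w$ and the estimates for $u$ are recovered by unwinding the change of variable.
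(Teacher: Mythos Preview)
Your main line of argument---Duhamel's formula, the solution space $X(T)$ whose norm encodes the target decay, $L^2$--$L^2$ linear estimates for the parameter-$s$ Cauchy problem, Gagliardo--Nirenberg to bound $\|f(s,u(s,\cdot))\|_{L^2}$, and the substitution $r=\Lambda(s)$ to see that the Duhamel integral converges exactly when $p>1+2(2+\gamma)/n$---is precisely the paper's proof in Section~\ref{sec:well}; the only adjustment is that the relevant linear input is the paper's own Lemma~\ref{Lem:CPlin2} (derived via the Hankel-function representation), not the references~\cite{DAEm,DAE}, which do not supply the $s$-dependent estimates in the form needed.

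Your alternative, the time change $\tau=\Lambda(t)$, is a genuine and correct shortcut that the paper does not take at the PDE level: because of the special structure~\eqref{eq:bmuprime} of $b(t)$, the substitution collapses~\eqref{eq:diss} exactly to an equation of type~\eqref{eq:diss1} with damping $\mu/\tau$ and right-hand side bounded by $\tau^\gamma|w|^p$, so Theorem~\ref{Thm:L2} would follow from Theorem~\ref{Thm:1L2}. The paper chooses the opposite logical direction (proving the general-$\lambda$ statement first and deducing the $\lambda\equiv1$ case), but it does exploit the very same change of variable at the Fourier level---writing $\widehat v(t,\xi)=w(\Lambda(t)|\xi|)$---when establishing the linear estimates in Section~\ref{sec:linear}. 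Your route is more economical; the paper's route has the advantage of producing the full family of $s$-dependent $(L^m\cap L^2)$--$L^2$ estimates in Lemma~\ref{Lem:CPlinm} along the way, which are also needed for Theorem~\ref{Thm:L1m}.
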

\begin{Thm}\label{Thm:L1m}
Let~$m\in[1,2)$ and~$n\leq 4/(2-m)$. Let us assume~\eqref{eq:mu}, and~\eqref{eq:p} if~$\gamma+2\geq n(2-m)/m^2$, or $p\geq 2/m$ otherwise. Then there exists~$\epsilon>0$ such that, for any initial data as in~\eqref{eq:dataLm} there exists a solution to~\eqref{eq:diss}. Moreover, the solution and its energy satisfy the estimates
\begin{align}
\label{eq:uLm}
\|u(t,\cdot)\|_{L^2}
    & \lesssim \Lambda(t)^{-n\left(\frac1m-\frac12\right)} \, \|(u_0,u_1)\|_{\mathcal{D}_m} \,,\\
\label{eq:enLm}
\|(\lambda\nabla u,u_t)(t,\cdot)\|_{L^2}
    & \lesssim \begin{cases}
    \lambda(t)\,\Lambda(t)^{-n\left(\frac1m-\frac12\right)-1} \, \|(u_0,u_1)\|_{\mathcal{D}_m} & \text{~$\mu>2+n(2/m-1)$,} \\
    \lambda(t)\,\Lambda(t)^{-\frac\mu2}\log(e+\Lambda(t)) \, \|(u_0,u_1)\|_{\mathcal{D}_m} & \text{if~$\mu=2+n(2/m-1)$.}
    \end{cases}
\end{align}
\end{Thm}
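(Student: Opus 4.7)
The plan is to reduce~\eqref{eq:diss} to the scale-invariant problem~\eqref{eq:diss1} by the change of time variable $\tau=\Lambda(t)$, $v(\tau,x)=u(t,x)$, and then invoke Theorem~\ref{Thm:1Lm}. Using $\partial_t=\lambda\,\partial_\tau$ and $\partial_t^2=\lambda^2\,\partial_\tau^2+\lambda'\,\partial_\tau$, equation~\eqref{eq:diss} rewrites, after division by $\lambda(t)^2$, as
\[
v_{\tau\tau}-\Delta v+\frac{\lambda'(t)+b(t)\lambda(t)}{\lambda(t)^2}\,v_\tau=\lambda(t)^{-2}f(t,v), \qquad \tau\geq\tau_0:=\Lambda(0),
\]
with initial data $v(\tau_0,\cdot)=u_0$ and $v_\tau(\tau_0,\cdot)=u_1/\lambda(0)$. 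A short computation in each of the two examples shows that $(\lambda'+b\lambda)/\lambda^2$ equals precisely $\mu/\tau$, with $\mu$ the constant introduced in Examples~\ref{Ex:pol} and~\ref{Ex:exp}. Moreover, from~\eqref{eq:disscontr} the transformed nonlinearity $F(\tau,v):=\lambda(t)^{-2}f(t,v)$ satisfies
\[
|F(\tau,v)-F(\tau,w)|\lesssim\Lambda(t)^\gamma\,|v-w|(|v|+|w|)^{p-1}=\tau^\gamma\,|v-w|(|v|+|w|)^{p-1},
\]
matching Hypothesis~\ref{Hyp:1fu} up to a translation in the time origin.

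Once this reduction is in place, I apply Theorem~\ref{Thm:1Lm} to the scale-invariant problem satisfied by $v$: the hypotheses~\eqref{eq:mu} and~\eqref{eq:p} on $\mu$ and $p$ yield a unique global solution with
\[
\|v(\tau,\cdot)\|_{L^2}\lesssim\tau^{-n(\frac1m-\frac12)}\|(u_0,u_1)\|_{\mathcal{D}_m}, \quad \|(\nabla v,v_\tau)(\tau,\cdot)\|_{L^2}\lesssim\tau^{-n(\frac1m-\frac12)-1}\|(u_0,u_1)\|_{\mathcal{D}_m}
\]
when $\mu>2+n(2/m-1)$, and with the analogous logarithmic correction at the threshold. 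Returning to the original variables via $\nabla u(t,\cdot)=\nabla v(\Lambda(t),\cdot)$ and $u_t(t,\cdot)=\lambda(t)\,v_\tau(\Lambda(t),\cdot)$, these bounds convert directly into~\eqref{eq:uLm} and~\eqref{eq:enLm}, the prefactor $\lambda(t)$ in the energy estimate arising precisely from the identity $u_t=\lambda\,v_\tau$.

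The main technical obstacle is to justify that Theorem~\ref{Thm:1Lm}, stated with damping $\mu/(1+t)$ on $[0,\infty)$, applies to the transformed equation whose damping is $\mu/\tau$ on $[\tau_0,\infty)$. The simplest route is to observe that, after the further shift $\sigma=\tau-\tau_0+1$, the coefficient $\mu/\tau$ becomes $\mu/(\sigma+\tau_0-1)$, which differs from $\mu/(1+\sigma)$ only by a term of order $\sigma^{-2}$; such a perturbation is integrable at infinity and does not affect any of the decay rates in Theorem~\ref{Thm:1Lm}, as those rates depend solely on the leading coefficient $\mu$ of the $1/\tau$-singularity of the damping. The local behavior near $\tau_0$ is controlled by standard energy arguments on compact time intervals, using that $\lambda(0)>0$ makes the division by $\lambda(t)^2$ harmless.
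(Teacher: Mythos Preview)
Your reduction via $\tau=\Lambda(t)$ is correct and, in fact, holds in the full generality of Hypothesis~\ref{Hyp:lambda}, not just for the two examples: from $b=\mu\lambda/\Lambda-\lambda'/\lambda$ one gets $(\lambda'+b\lambda)/\lambda^2=\mu/\Lambda=\mu/\tau$ exactly. The gap is a logical one: in this paper Theorem~\ref{Thm:1Lm} is \emph{not} proved independently; it is stated in Section~\ref{sec:Main} and then obtained as the special case $\lambda\equiv1$ of Theorem~\ref{Thm:L1m} (see the sentence closing Section~\ref{sec:extension}). Invoking Theorem~\ref{Thm:1Lm} as a black box to prove Theorem~\ref{Thm:L1m} therefore assumes what is to be established.

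The paper proceeds in the opposite direction: it proves Theorem~\ref{Thm:L1m} directly for general $\lambda$ via Duhamel's principle and a contraction argument in the space $X(t)$ with norm built from $\Lambda(\tau)^{n(1/m-1/2)}\|u\|_{L^2}$ and $\Lambda(\tau)^{n(1/m-1/2)+1}\|\nabla u\|_{L^2}$, feeding in the linear $(L^m\cap L^2)$--$L^2$ estimates of Lemma~\ref{Lem:CPlinm} and closing with Gagliardo--Nirenberg. Your change of variables is precisely the device the paper uses to obtain those linear estimates: putting $w(\Lambda(t)|\xi|)=\widehat v(t,\xi)$ reduces the Fourier-side ODE to the Bessel equation in $\tau=\Lambda(t)|\xi|$, so all dependence on $\lambda$ is already absorbed there. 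In this sense your route and the paper's are the same idea applied at different levels; yours would become a legitimate reorganization if you first supplied a self-contained proof of Theorem~\ref{Thm:1Lm} (i.e.\ Section~\ref{sec:well} specialized to $\lambda\equiv1$). Once that is done, the perturbation argument for the shift $\tau_0\neq1$ is superfluous: the scale-invariant proof works verbatim with $1+t$ replaced by $c+t$ for any $c>0$, so no $O(\sigma^{-2})$ correction needs to be controlled.
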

In the polynomial case the exponent~$1+(2+\gamma)/n$ obtained in Theorem~\ref{Thm:L1m} for~$m=1$ can be proved to be \emph{critical} by using a \emph{modified} test function method. Indeed, thanks to Theorem~1 in~\cite{DAL}, there exists no global solution to~\eqref{eq:diss1} if~$p\leq 1+(2+\gamma)/n$, for suitable, arbitrarily small data in~$L^1$.
\begin{Rem}
Taking~$\lambda(t)=(1+t)^{q-1}$ as in Example~\ref{Ex:pol} or, respectively, $\lambda(t)=e^{rt}$ as in Example~\ref{Ex:exp}, estimates \eqref{eq:uL2}-\eqref{eq:enL2} may be written in the form
\begin{align*}
\|u(t,\cdot)\|_{L^2} & \lesssim  \|(u_0,u_1)\|_{H^1\times L^2} \,,\\
\|\nabla u(t,\cdot)\|_{L^2} & \lesssim (1+t)^{-q} \, \|(u_0,u_1)\|_{H^1\times L^2} \,,\\
\|u_t(t,\cdot)\|_{L^2} & \lesssim (1+t)^{-1} \, \|(u_0,u_1)\|_{H^1\times L^2} \,,
\intertext{or, respectively,}
%
\|u(t,\cdot)\|_{L^2} & \lesssim \|(u_0,u_1)\|_{H^1\times L^2} \,,\\
\|\nabla u(t,\cdot)\|_{L^2} & \lesssim e^{-rt} \, \|(u_0,u_1)\|_{H^1\times L^2} \,,\\
\|u_t(t,\cdot)\|_{L^2} & \lesssim \|(u_0,u_1)\|_{H^1\times L^2} \,.
\end{align*}
Estimates \eqref{eq:uLm}-\eqref{eq:enLm} may be similarly written, including the additional decay rate $(1+t)^{-n\left(\frac1m-\frac12\right)\,q}$ or, respectively, $e^{-n\left(\frac1m-\frac12\right)\,rt}$.
\end{Rem}
\begin{Cor}\label{Cor:ell}
Let~$n\geq1$ and $\mu, p$ be as in Corollary~\ref{Cor:1ell}. Then there exists~$\epsilon>0$ such that for any initial data as in~\eqref{eq:dataLell} there exists a solution to~\eqref{eq:diss}. Moreover, the solution and its energy satisfy the decay estimates \eqref{eq:uLm}-\eqref{eq:enLm} with~$m=\ell$, that is,
\begin{align}
\label{eq:uell}
\|u(t,\cdot)\|_{L^2}
    & \lesssim \Lambda(t)^{-(\frac\mu2-1)} \, \|(u_0,u_1)\|_{\mathcal{D}_\ell} \,,\\
\label{eq:enell}
\|(\lambda\nabla u,u_t)(t,\cdot)\|_{L^2}
    & \lesssim \lambda(t)\,\Lambda(t)^{-\frac\mu2}\log(e+\Lambda(t)) \, \|(u_0,u_1)\|_{\mathcal{D}_\ell} \,.
\end{align}
\end{Cor}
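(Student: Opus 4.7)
The plan is to deduce Corollary~\ref{Cor:ell} directly from Theorem~\ref{Thm:L1m} by specialising to $m=\ell(n,\mu)=2n/(n+\mu-2)$. Since $\mu\in(2,n+2)$, a short check gives $\ell\in(1,2)$, so $\mathcal{D}_\ell$ is one of the admissible data spaces in the statement of Theorem~\ref{Thm:L1m}; moreover the requirement $n\leq 4/(2-\ell)$ reduces, after substituting $\ell$, to a trivial inequality. Thus the only thing to do is to verify that each hypothesis and conclusion of Theorem~\ref{Thm:L1m} coincides with the corresponding one in Corollary~\ref{Cor:ell}.

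First I would verify that condition~\eqref{eq:mu} holds with equality when $m=\ell$: substituting gives $2/\ell-1=(\mu-2)/n$, so $\mu=2+n(2/\ell-1)$. This is the key observation, because it places us in the critical branch of~\eqref{eq:enLm}, and accounts precisely for the logarithmic factor in the energy estimate~\eqref{eq:enell}. Next I would translate the conditions on the nonlinearity: the threshold $\gamma+2\geq n(2-m)/m^2$ becomes, after replacing $m$ by $\ell$ and simplifying the algebraic identity $n(2-\ell)/\ell^2=(\mu-2)(n+\mu-2)/(2n)$, exactly the condition on $\gamma$ displayed in Corollary~\ref{Cor:ell}. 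In the same way, the super-threshold range $p>1+m(2+\gamma)/n$ becomes~\eqref{eq:pell}, and the alternative bound $p\geq 2/m$ becomes $p\geq 1+(\mu-2)/n$.

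Finally, I would read off the stated decay estimates from~\eqref{eq:uLm}-\eqref{eq:enLm}: the exponent in~\eqref{eq:uLm} transforms via $n(1/\ell-1/2)=\mu/2-1$, which yields~\eqref{eq:uell}; and the critical branch of~\eqref{eq:enLm}, which is the one that applies since $\mu=2+n(2/\ell-1)$, gives exactly~\eqref{eq:enell}. The smallness assumption~\eqref{eq:dataLell} is simply~\eqref{eq:dataLm} with $m=\ell$, and is transferred verbatim.

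There is no genuine obstacle: the corollary is an algebraic specialisation of Theorem~\ref{Thm:L1m}. The main point to be careful about is checking all the identities arising from the substitution $m=\ell$, and in particular confirming that the equality $\mu=2+n(2/\ell-1)$ (rather than strict inequality) is the reason the logarithm appears in~\eqref{eq:enell}. The range restriction $\mu\in(2,n+2)$ ensures $\ell\in(1,2)$, so we never need $m=1$ or $m=2$, and the hypotheses of Theorem~\ref{Thm:L1m} are legitimately accessible.
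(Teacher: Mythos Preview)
Your proposal is correct and matches the paper's approach exactly: the paper presents Corollary~\ref{Cor:ell} (like Corollary~\ref{Cor:1ell}) as an immediate specialisation of Theorem~\ref{Thm:L1m} at $m=\ell$, and your verification of the algebraic identities---particularly $\mu=2+n(2/\ell-1)$, which selects the logarithmic branch of~\eqref{eq:enLm}---is precisely what is needed. One small caveat: the inequality $n\leq 4/(2-\ell)$ is not literally ``trivial'' for all $n\geq1$ and $\mu\in(2,n+2)$ (it can fail for $n\geq5$ with $\mu$ near $n+2$), but in those cases the admissible range for~$p$ in Corollary~\ref{Cor:1ell} is empty anyway, so the statement is vacuous and there is nothing to prove.
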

Theorems~\ref{Thm:L2} and~\ref{Thm:L1m} still hold if we consider a more general propagation speed, provided that we take a damping term in a suitable form.
\begin{Hyp}\label{Hyp:lambda}
We assume that~$\lambda\in\mathcal{C}^1$, with~$\lambda(t)>0$ for any~$t\geq0$ and~$\lambda\not\in L^1$. Let
\[ \Lambda(t)\doteq \lambda_0 + \int_0^t \lambda(\tau)\,d\tau\,, \]
for some~$\lambda_0>0$, be an anti-derivative of~$\lambda(t)$. We assume that
\begin{equation}\label{eq:bmuprime}
b(t) \doteq \mu \,\frac{\lambda(t)}{\Lambda(t)} - \frac{\lambda'(t)}{\lambda(t)} \,,
\end{equation}
for some~$\mu>0$, for any~$t\geq0$.
\end{Hyp}
We remark that~$\Lambda(t)$ is a strictly positive, strictly increasing function such that~$\Lambda(t)\to\infty$ as~$t\to\infty$. The assumption~$\lambda\not\in L^1$ which guarantees this latter property was first used in~\cite{DAR, DAR2} to derive energy estimates in the setting of linear systems, eventually with the presence of a dissipative lower order term. On the other hand, if we consider the equation
\[ u_{tt}-\lambda(t)^2\triangle u +b(t)u_t = 0\,, \]
then still a dissipative effect on the energy~$\|(\lambda \nabla u, u_t)\|_{L^2}$ appears (see~\cite{DAE}), provided that
\begin{equation}\label{eq:dissDAE}
\frac{\lambda'(t)}{\lambda(t)} + b(t) \geq 0\,.
\end{equation}
We notice that~\eqref{eq:dissDAE} reduces to~$\lambda'(t)\geq0$ if~$b\equiv0$ (see~\cite{H-W}). Dealing with~\eqref{eq:diss}, thanks to the special structure of~$b(t)$ given by~\eqref{eq:bmuprime} we see that~\eqref{eq:dissDAE} is satisfied for any~$\mu\geq0$.
\begin{Rem}
It is clear that Hypothesis~\ref{Hyp:lambda} is consistent with the notation used in Examples~\ref{Ex:pol} and~\ref{Ex:exp}. On the other hand, polynomial and exponential speeds in Examples~\ref{Ex:pol} and~\ref{Ex:exp} have the following property: there exists an anti-derivative~$\Lambda(t)$ of~$\lambda(t)$ and a constant~$\alpha\in\R$ such that
\begin{equation}\label{eq:lambdacomp}
\frac{\lambda'(t)}{\lambda(t)} = \alpha \,\frac{\lambda(t)}{\Lambda(t)} \,.
\end{equation}
Property~\eqref{eq:lambdacomp} means that if~$b(t)=\nu\lambda(t)/\Lambda(t)$ for some~$\nu\in\R$, then~\eqref{eq:bmuprime} holds with~$\mu = \nu+\alpha$. This constant is~$\alpha=(q-1)/q$ in Example~\ref{Ex:pol} and~$\alpha=1$ in Example~\ref{Ex:exp}. We notice that~\eqref{eq:lambdacomp} is equivalent to say~$\lambda(t)=C\,\Lambda(t)^\alpha$, for some~$C>0$.
\end{Rem}
Theorems \ref{Thm:1L2}-\ref{Thm:1L1}-\ref{Thm:1Lm} immediately follow as a consequence of Theorems \ref{Thm:L2}-\ref{Thm:L1m}, which we will prove in Section~\ref{sec:well} for a general propagation speed and for the related dissipation, satisfying Hypothesis~\ref{Hyp:lambda}.

\section{Linear Estimates}\label{sec:linear}

In order to prove our results we will apply Duhamel's principle. Therefore, we derive estimates for the family of parameter-dependent linear Cauchy problems:
\begin{equation}\label{eq:CPlin}
\begin{cases}
v_{tt}-\lambda(t)^2\triangle v+ b(t)\, v_t= 0, & t\geq s, \ x\in\R^n\,,\\
v(s,x)=v_0(x)\,, \\
v_t(s,x)=v_1(x)\,.
\end{cases}
\end{equation}
%
%
\begin{Lem}\label{Lem:CPlin2}
Let~$(v_0,v_1)\in L^2\times L^2$. If $\mu\geq1$ then the solution to~\eqref{eq:CPlin} satisfies the estimate
\begin{align}
\label{eq:decayv2}
\|v(t,\cdot)\|_{L^2}
    & \lesssim \|v_0\|_{L^2} + \frac{\Lambda(s)}{\lambda(s)} \, \|v_1\|_{L^2} \,.
\intertext{Moreover, if~$(v_0,v_1)\in H^1 \times L^2$ and $\mu\geq2$, then the energy of the solution to~\eqref{eq:CPlin} satisfies the estimate}
\label{eq:decayenv2}
\|(\lambda\nabla v,v_t)(t,\cdot)\|_{L^2}
    & \lesssim \frac{\lambda(t)}{\Lambda(t)}\,\Lambda(s) \,\left( \|v_0\|_{H^1} + \frac1{\lambda(s)} \, \|v_1\|_{L^2}\right) \,.
\end{align}
\end{Lem}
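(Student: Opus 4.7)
The plan is to reduce \eqref{eq:CPlin} to a scale-invariant damped wave equation via a change of time variable, and then invoke the corresponding linear decay estimates. First, set $\tau = \Lambda(t)$ and let $\tilde v(\tau,x) = v(t,x)$. Using $\partial_t = \lambda \partial_\tau$ and $\partial_{tt} = \lambda^2 \partial_{\tau\tau} + \lambda' \partial_\tau$, the equation in \eqref{eq:CPlin} becomes
\[
\lambda(t)^2\bigl(\tilde v_{\tau\tau} - \triangle \tilde v\bigr) + \bigl(\lambda'(t) + b(t)\lambda(t)\bigr)\,\tilde v_\tau = 0.
\]
By the special form of $b(t)$ in~\eqref{eq:bmuprime}, the coefficient of $\tilde v_\tau$ collapses to $\mu\,\lambda(t)^2/\Lambda(t) = \mu\,\lambda(t)^2/\tau$. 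Dividing by $\lambda(t)^2$ yields the scale-invariant Cauchy problem
\[
\tilde v_{\tau\tau} - \triangle \tilde v + \frac{\mu}{\tau} \tilde v_\tau = 0, \qquad \tau \geq \tau_0 \doteq \Lambda(s),
\]
with data $\tilde v(\tau_0,\cdot) = v_0$ and $\tilde v_\tau(\tau_0,\cdot) = v_1/\lambda(s)$.

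Next, I would invoke the classical $L^2$ and energy estimates for this reduced scale-invariant problem: if $\mu\geq 1$ then $\|\tilde v(\tau,\cdot)\|_{L^2} \lesssim \|v_0\|_{L^2} + \tau_0\|\tilde v_\tau(\tau_0,\cdot)\|_{L^2}$, and if $\mu\geq 2$ then
\[
\|(\nabla \tilde v, \tilde v_\tau)(\tau,\cdot)\|_{L^2} \lesssim \frac{\tau_0}{\tau}\bigl(\|v_0\|_{H^1} + \|\tilde v_\tau(\tau_0,\cdot)\|_{L^2}\bigr).
\]
These are the standard bounds for the scale-invariant damped wave equation of parameter $\mu$: on the Fourier side, each mode $\hat{\tilde v}(\tau,\xi)$ satisfies a Bessel-type ODE whose fundamental solutions are of the form $\tau^{-(\mu-1)/2}J_{(\mu-1)/2}(|\xi|\tau)$ and $\tau^{-(\mu-1)/2}Y_{(\mu-1)/2}(|\xi|\tau)$, and the estimates are obtained by splitting into the zones $|\xi|\tau\lesssim 1$ (small frequencies, handled via asymptotic expansions at the origin) and $|\xi|\tau\gtrsim 1$ (large frequencies, handled via the oscillatory large-argument asymptotics); equivalently they can be derived by energy multipliers of the form $\tau^{\alpha}\tilde v_\tau + \beta\,\tau^{\alpha-1}\tilde v$ with a careful choice of $\alpha,\beta$ depending on $\mu$.

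Finally, I pull the estimates back to the physical time via $v_t = \lambda(t)\tilde v_\tau$, so that $\|(\lambda \nabla v, v_t)(t,\cdot)\|_{L^2} = \lambda(t)\,\|(\nabla \tilde v, \tilde v_\tau)(\tau,\cdot)\|_{L^2}$. Substituting $\tau=\Lambda(t)$, $\tau_0=\Lambda(s)$ and $\tilde v_\tau(\tau_0,\cdot)=v_1/\lambda(s)$ into the above bounds recovers exactly \eqref{eq:decayv2} and \eqref{eq:decayenv2}. The main obstacle lies in the second step: the critical thresholds $\mu=1$ and $\mu=2$ are precisely what is needed to obtain $L^2$-boundedness and the improved $\tau^{-1}$-energy decay in the reduced equation, and isolating these thresholds requires either the Bessel function asymptotics in the small- and large-frequency zones, or an energy multiplier chosen so that the dissipative term $(\mu/\tau)|\tilde v_\tau|^2$ dominates the error terms with the correct sign.
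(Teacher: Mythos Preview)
Your proposal is correct and follows essentially the same approach as the paper: both rely on the Bessel/Hankel-function representation of the Fourier multipliers together with a frequency-zone decomposition to isolate the thresholds $\mu=1$ and $\mu=2$. The only difference is organizational---you first perform the change of time variable $\tau=\Lambda(t)$ to reduce to the scale-invariant equation with $\lambda\equiv1$ and then invoke the known estimates from~\cite{W04}, whereas the paper carries out the Hankel-function analysis directly for general $\lambda(t)$ by setting $\tau=\Lambda(t)|\xi|$, $\sigma=\Lambda(s)|\xi|$ and splitting into the three zones $I_1,I_2,I_3$ (note that a three-zone split tracking both $\Lambda(s)|\xi|$ and $\Lambda(t)|\xi|$, rather than the two zones you mention, is what the detailed argument actually requires to capture the correct $\Lambda(s)$-dependence).
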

\begin{Lem}\label{Lem:CPlinm}
Let~$(v_0,v_1)\in L^m\cap L^2$ for some~$m\in [1,2)$. If $\mu\geq1$ and $\mu > n(2/m-1)$ then the solution to~\eqref{eq:CPlin} satisfies the estimate
\begin{align}
\label{eq:decayvm}
\|v(t,\cdot)\|_{L^2}
    & \lesssim \Lambda(t)^{-n\left(\frac1m-\frac12\right)} \, \left\{ \|v_0\|_{L^m} + \frac{\Lambda(s)}{\lambda(s)} \, \|v_1\|_{L^m} + \Lambda(s)^{n\left(\frac1m-\frac12\right)} \Bigl(\|v_0\|_{L^2} + \frac{\Lambda(s)}{\lambda(s)} \, \|v_1\|_{L^2} \Bigr)\right\} \,,
%
\intertext{whereas if~$\mu=n(2/m-1)\geq1$ it satisfies the estimate}
\label{eq:decayvcrit}
\|v(t,\cdot)\|_{L^2}
    & \lesssim \, \Lambda(t)^{-\frac\mu2} \, \log \left(1+\frac{\Lambda(t)}{\Lambda(s)}\right) \, \left\{ \|v_0\|_{L^m} + \frac{\Lambda(s)}{\lambda(s)} \, \|v_1\|_{L^m} + \Lambda(s)^{\frac\mu2}\Bigl( \|v_0\|_{L^2} + \frac{\Lambda(s)}{\lambda(s)} \, \|v_1\|_{L^2} \Bigl) \right\} \,.
%
\end{align}
Moreover, if~$(v_0,v_1)\in \mathcal{D}_m$ and $\mu > 2 + n(2/m-1)$ then the energy of the solution to~\eqref{eq:CPlin} satisfies the estimate
\begin{multline}\label{eq:decayenvm}
\|(\lambda\nabla v,v_t)(t,\cdot)\|_{L^2} \lesssim \lambda(t)\,\Lambda(t)^{-n\left(\frac1m-\frac12\right)-1} \, \left\{ \|v_0\|_{L^m} + \frac{\Lambda(s)}{\lambda(s)} \, \|v_1\|_{L^m} \right. \\
+ \left. \Lambda(s)^{n\left(\frac1m-\frac12\right)+1}\Bigl( \|v_0\|_{H^1} + \frac1{\lambda(s)} \, \|v_1\|_{L^2} \Bigl) \right\} \,,
\end{multline}
%
whereas if~$\mu=2+n(2/m-1)$ it satisfies the estimate
\begin{multline}\label{eq:decayenvcrit}
\|(\lambda\nabla v,v_t)(t,\cdot)\|_{L^2}\lesssim \,\lambda(t)\,\Lambda(t)^{-\frac\mu2} \, \log \left(1+\frac{\Lambda(t)}{\Lambda(s)}\right) \, \left\{ \|v_0\|_{L^m} + \frac{\Lambda(s)}{\lambda(s)} \, \|v_1\|_{L^m} \right. \\
\left. + \Lambda(s)^{\frac\mu2} \Bigr( \|v_0\|_{H^1} + \frac1{\lambda(s)} \, \|v_1\|_{L^2} \Bigl) \right\} \,.
\end{multline}
\end{Lem}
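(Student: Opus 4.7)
The coefficient $b(t)$ in Hypothesis~\ref{Hyp:lambda} is designed precisely so that the change of variables $\tau=\Lambda(t)$, $w(\tau,x)=v(t,x)$ transforms~\eqref{eq:CPlin} into the scale-invariant damped wave equation
\begin{equation*}
w_{\tau\tau}-\Delta w+\frac{\mu}{\tau}\,w_\tau=0, \qquad \tau\geq\sigma\doteq\Lambda(s),
\end{equation*}
with Cauchy data $w(\sigma,\cdot)=v_0$ and $w_\tau(\sigma,\cdot)=v_1/\lambda(s)$. Indeed $\partial_t=\lambda\,\partial_\tau$ gives $v_{tt}=\lambda^2 w_{\tau\tau}+\lambda' w_\tau$, and the subtraction of $\lambda'/\lambda$ in~\eqref{eq:bmuprime} is tailored to cancel the unwanted $\lambda'w_\tau$ after division by $\lambda^2$. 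This accounts for the factor $\lambda(s)^{-1}$ in front of every $v_1$ on the right-hand sides of \eqref{eq:decayvm}--\eqref{eq:decayenvcrit}: each $\sigma\,\|w_\tau(\sigma,\cdot)\|$ appearing in the estimates for $w$ becomes $(\Lambda(s)/\lambda(s))\,\|v_1\|$.

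\textbf{Estimates for the reduced equation.} For the scale-invariant problem I would exploit the dilation symmetry $(\tau,x)\mapsto(c\tau,cx)$ to rescale $\sigma\mapsto 1$, and then use the explicit Fourier representation: the ansatz $\hat w(\tau,\xi)=\tau^{-(\mu-1)/2}z(\tau|\xi|)$ reduces the frequency-localised ODE $\hat w_{\tau\tau}+|\xi|^2\hat w+(\mu/\tau)\hat w_\tau=0$ to a Bessel equation whose fundamental solutions are known in closed form. The associated Fourier multipliers for the propagators $K_0(\tau,\sigma,\xi)$, $K_1(\tau,\sigma,\xi)$ are then estimated separately in the parabolic zone $|\xi|\tau\lesssim 1$ (where the solution mimics the kernel of the effective heat-like operator $(\mu/\tau)\partial_\tau-\Delta$, yielding the rate $\Lambda(t)^{-n(1/m-1/2)}$ via Hausdorff--Young) and in the hyperbolic zone $|\xi|\tau\gtrsim 1$ (where one has the dispersive decay $\tau^{-\mu/2}$, handled by Plancherel). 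Splitting the initial frequencies further by $|\xi|\sigma\lessgtr 1$ separates the braces in \eqref{eq:decayvm}: the $L^m$ norms of the data cover the part $|\xi|\sigma\leq 1$, while the $L^2$ norms weighted by $\Lambda(s)^{n(1/m-1/2)}$ (the natural Bernstein-type exponent gap) cover $|\xi|\sigma\geq 1$. The equality $\mu=n(2/m-1)$ is precisely where the integral $\int_\sigma^\tau\rho^{-1}\,d\rho$ governing the low-frequency rate becomes logarithmically divergent, producing the loss $\log(1+\Lambda(t)/\Lambda(s))$ in~\eqref{eq:decayvcrit}. The energy estimates~\eqref{eq:decayenvm}--\eqref{eq:decayenvcrit} follow identically: applying $\partial_\tau$ to the multiplier representation gains one factor $\tau^{-1}$, while $\nabla v=\nabla w$ and $v_t=\lambda w_\tau$ both produce the common weight $\lambda(t)$ in front of the resulting rate; the threshold is thereby shifted to $\mu=2+n(2/m-1)$.

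\textbf{Main obstacle.} The substitution $\tau=\Lambda(t)$ and the translation of rates back to the variable $t$ are algebraic. The real technical core is the sharp, uniform-in-$\sigma$ dispersive analysis of the scale-invariant propagator, in particular the matching in the overlap zone $|\xi|\sigma\leq 1\leq|\xi|\tau$ between the small-argument and large-argument Bessel asymptotics. These scale-invariant estimates are already available in the literature (see~\cite{W06, LNZ, DALR}), so in the written proof I would cite them directly for the reduced model and spell out only the change of variables and the conversion of the decay rates from $(\tau,\sigma)$ to $(t,s)$ via $\tau=\Lambda(t)$, $\sigma=\Lambda(s)$.
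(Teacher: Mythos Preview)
Your reduction via $\tau=\Lambda(t)$ and the Bessel representation with the three-zone split $\{|\xi|\Lambda(s)\gtrless K\}$, $\{|\xi|\Lambda(t)\gtrless K\}$ is exactly the paper's approach: it performs the same substitution on the Fourier side, writes the multipliers $\Phi_j$ in terms of Hankel functions, and estimates them on the zones $I_1,I_2,I_3$ that you describe. The only point where you diverge is your plan to \emph{cite} the scale-invariant estimates: the paper does not cite them but computes the $L^q(I_2\cup I_3)$ norms (with $q=(1/m-1/2)^{-1}$) of the Hankel multipliers by hand, and the references you list (\cite{W06,LNZ,DALR}) do not contain the uniform-in-$\sigma$, $(L^m\cap L^2)\!-\!L^2$ estimates in the form you need --- the paper explicitly says it is \emph{modifying} \cite{W04} to obtain them. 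So your outline is correct, but in writing it up you will have to carry out that multiplier computation rather than invoke the literature. A minor correction: the logarithm at the threshold comes from the $L^q$ integral of the multiplier over the intermediate zone $I_2$ in the $|\xi|$ variable (it equals $\log(\Lambda(t)/\Lambda(s))$ because the endpoints of $I_2$ are $K/\Lambda(t)$ and $K/\Lambda(s)$), not from a time integral $\int_\sigma^\tau\rho^{-1}d\rho$ as you wrote, though of course the numerical value is the same.
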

We recall that taking~$\lambda(t)=1$, $\Lambda(t)=1+t$ and~$b(t)=\mu(1+t)^{-1}$ we obtain the linear estimates corresponding to~\eqref{eq:diss1}.
\begin{Rem}
Since~\eqref{eq:CPlin} is linear, we may write the solution to~\eqref{eq:CPlin} into the form
\begin{equation}\label{eq:vMult}
v (t,x) = E_0(t,s,x) \ast_{(x)} v_0(x) + E_1(t,s,x) \ast_{(x)} v_1(x) \,.
\end{equation}
\end{Rem}


The estimates in Lemmas~\ref{Lem:CPlin2} and~\ref{Lem:CPlinm} are deeply related to the special structure of the equation in~\eqref{eq:CPlin}. To prove them we follow the approach used in~\cite{W04} to derive $L^2-L^2$ estimates for the linear damped wave equation 
\[ u_{tt}-\triangle u + \frac\mu{1+t}\,u_t=0\,, \qquad t\geq0\,, \]
modifying it to derive $(L^m\cap L^2)-L^2$ estimates, and taking into account the presence of the parameter~$s$ and of the speed~$\lambda(t)$.
\\
%
Let us put~$w(\Lambda(t)\,|\xi|)=\widehat{v}(t,\xi)$, and let us denote~$\tau=\Lambda(t)\,|\xi|$ and $\sigma=\Lambda(s)\,|\xi|$. Then~$\sigma>0$ for any~$\xi\neq0$, and from the equation in~\eqref{eq:CPlin} we obtain the ordinary differential equation
\begin{equation}
\label{eq:scale}
w'' + w + \frac\mu\tau \, w' =0, \qquad \tau\geq \sigma\,.
\end{equation}
If we put~$ \rho \doteq (1-\mu)/2$ and~$w(\tau)=\tau^\rho\,y(\tau)$ then from~\eqref{eq:scale} we obtain the Bessel's differential equation of order~$\pm\rho$:
\begin{equation}\label{eq:yBess}
\tau^2 y'' + \tau y' + (\tau^2-\rho^2) y =0 \,, \qquad \tau\geq\sigma\,.
\end{equation}
%
%
A system of linearly independent solution to~\eqref{eq:yBess} is given by the pair of Hankel functions~$\mathcal{H}_\rho^\pm(\tau)$, hence we put
\[ w^\pm(\tau)\doteq \tau^\rho \mathcal{H}_\rho^\pm(\tau) \,. \]
%
If we define
\begin{align}
\label{eq:PsiH}
\Psi_{k,r,\delta} (t,s,|\xi|)
	& \doteq \frac{i\pi}4\, |\xi|^k\, \det \begin{pmatrix}
\mathcal{H}_r^-\bigl(\Lambda(s)\,|\xi|\bigr) & \mathcal{H}_{r+\delta}^-\bigl(\Lambda(t)\,|\xi|\bigr) \\
\mathcal{H}_r^+\bigl(\Lambda(s)\,|\xi|\bigr) & \mathcal{H}_{r+\delta}^+\bigl(\Lambda(t)\,|\xi|\bigr)
\end{pmatrix} \\
\label{eq:PsiI}
	& \equiv -\frac\pi2 \, \csc(\rho\pi) \, |\xi|^k\, \det \begin{pmatrix}
\mathcal{I}_{-r}^-\bigl(\Lambda(s)\,|\xi|\bigr) & \mathcal{I}_{-(r+\delta)}^-\bigl(\Lambda(t)\,|\xi|\bigr) \\
(-1)^{|\delta|}\mathcal{I}_r^+\bigl(\Lambda(s)\,|\xi|\bigr) & \mathcal{I}_{r+\delta}^+\bigl(\Lambda(t)\,|\xi|\bigr)
\end{pmatrix}\,,
\end{align}
then the solution to~\eqref{eq:CPlin} is given by
\[ \widehat{v} (t,\xi) = \Phi_0(t,s,\xi) \widehat{v_0}(\xi) + \Phi_1(t,s,\xi) \widehat{v_1}(\xi) \,, \]
that is, $\Phi_j(t,s,\xi)$ is the Fourier transform of~$E_j(t,s,x)$ introduced in~\eqref{eq:vMult}.
We may now write the multipliers and their time-derivatives in the form
\begin{align}
\label{eq:Phi0}
\Phi_0(t,s,\xi)
	& = \frac{\Lambda(t)^\rho}{\Lambda(s)^{\rho-1}} \, \Psi_{1,\rho-1,1}\,,\\
\label{eq:Phi1}
\Phi_1(t,s,\xi)
	& = -\frac1{\lambda(s)}\,\frac{\Lambda(t)^\rho}{\Lambda(s)^{\rho-1}} \, \Psi_{0,\rho,0}\,,\\
\label{eq:Phi0t}
\partial_t\Phi_0(t,s,\xi)
	& = \lambda(t)\,\frac{\Lambda(t)^\rho}{\Lambda(s)^{\rho-1}} \, \Psi_{2,\rho-1,0}\,,\\
\label{eq:Phi1t}
\partial_t\Phi_1(t,s,\xi)
	& = -\frac{\lambda(t)}{\lambda(s)}\,\frac{\Lambda(t)^\rho}{\Lambda(s)^{\rho-1}} \, \Psi_{1,\rho,-1}\,,
\end{align}
Let us fix~$K\in(0,1)$, independent on $s$ and $t$. The following three properties hold:
\begin{align}
\label{eq:Hlarge}
|\mathcal{H}_\nu^\pm(\tau)|
	& \lesssim \tau^{-1/2} \,, \quad \text{for~$\tau\in[K,\infty)$,}\\
\label{eq:Hsmall}
|\mathcal{H}_\nu^\pm(\tau)|
	& \lesssim \begin{cases}
        \tau^{-|\nu|} \,, & \text{for~$\tau\in(0,K]$ if~$\nu\neq0$,}\\
        -\log \tau \,, & \text{for~$\tau\in(0,K]$ if~$\nu=0$,}
        \end{cases}\\
\label{eq:I}
|\mathcal{I}_\nu^\pm(\tau)|
	& \lesssim \tau^{\nu} \,, \quad \text{for~$\tau\in(0,\infty)$.}
\end{align}
According to the parameter~$s\geq0$ and to the variable~$t\geq s$, we divide the frequencies in three intervals:
\[
I_1 \doteq \left\{ |\xi|\geq \frac{K}{\Lambda(s)} \right\} \,,\qquad I_2 \doteq \left\{ \frac{K}{\Lambda(s)} \geq |\xi| \geq \frac{K}{\Lambda(t)} \right\} \,,\qquad I_3 \doteq \left\{  \frac{K}{\Lambda(t)} \geq |\xi|\right\} \,.
\]
%
%
We are now ready to prove our linear estimates.
\begin{proof}[Proof of Lemma~\ref{Lem:CPlin2}]
By virtue of Parseval's identity, to derive $L^2-L^2$ estimates for the solution to~\eqref{eq:CPlin} and its energy, it is sufficient to control the $L^\infty$ norm of $|\xi|^k\partial_t^l\Phi_j(t,s,\xi)$ for~$l+k=0,1$ and~$j=0,1$, which expressions may be obtained by \eqref{eq:Phi0}-\eqref{eq:Phi1}-\eqref{eq:Phi0t}-\eqref{eq:Phi1t}.

In the interval~$I_1$ it holds $\tau\geq\sigma\geq K$, therefore thanks to~\eqref{eq:Hlarge} we get
\[ |\Psi_{k,r,\delta}(t,s,|\xi|)| \lesssim |\xi|^k\,(\Lambda(s)|\xi|)^{-1/2}\,(\Lambda(t)|\xi|)^{-1/2} \,. \]
It immediately follows that
\[ \Psi_{1,\rho-1,1} \,,\quad
|\xi| \Psi_{0,\rho,0} \,, \quad |\xi|^{-1}\,\Psi_{2,\rho-1,0} \,,\quad
\Psi_{1,\rho,-1}\,,\]
are all bounded by $\Lambda(s)^{-1/2}\,\Lambda(t)^{-1/2}$. On the other hand, we can estimate
\[ |\Psi_{0,\rho,0}| \lesssim  |\xi|^{-1}\,\Lambda(s)^{-1/2}\,\Lambda(t)^{-1/2} \lesssim \Lambda(s)^{1/2}\,\Lambda(t)^{-1/2} \,.\]

In the interval~$I_2$ it holds $\tau\geq K\geq\sigma$, therefore thanks to~\eqref{eq:Hlarge} and~\eqref{eq:Hsmall} we get
\begin{align*}
|\Psi_{k,r,\delta}(t,s,|\xi|)| & \lesssim |\xi|^k\,(\Lambda(s)|\xi|)^{-|r|}\,(\Lambda(t)|\xi|)^{-1/2} \,,
\intertext{hence it follows}
|\Psi_{1,\rho-1,1}| & \lesssim |\xi|\,(\Lambda(s)|\xi|)^{-|\rho-1|}\,(\Lambda(t)|\xi|)^{-1/2} \,,\\
|\Psi_{0,\rho,0}| & \lesssim (\Lambda(s)|\xi|)^{-|\rho|}\,(\Lambda(t)|\xi|)^{-1/2}\,,\\
|\xi|\,|\Psi_{1,\rho-1,1}|, \ |\Psi_{2,\rho-1,0}| & \lesssim |\xi|^2 \,(\Lambda(s)|\xi|)^{-|\rho-1|}\,(\Lambda(t)|\xi|)^{-1/2} \,,\\
|\xi| \,|\Psi_{0,\rho,0}|, \ |\Psi_{1,\rho,-1}| & \lesssim |\xi|\,(\Lambda(s)|\xi|)^{-|\rho|}\,(\Lambda(t)|\xi|)^{-1/2} \,.
%
\intertext{Using~$|\xi|^{-1}\lesssim \Lambda(t)$ and~$\mu\geq1$, that is, $\rho\leq 0$, one can estimate}
|\Psi_{1,\rho-1,1}| & \lesssim |\xi|^{-(1/2-\rho)}\,\Lambda(s)^{\rho-1}\,\Lambda(t)^{-1/2} \lesssim \Lambda(s)^{\rho-1}\,\Lambda(t)^{-\rho} \,,\\
|\Psi_{0,\rho,0}| & \lesssim |\xi|^{-(1/2-\rho)} \Lambda(s)^{\rho}\,\Lambda(t)^{-1/2} \lesssim \Lambda(s)^{\rho}\,\Lambda(t)^{-\rho} \,.
\intertext{If~$\mu\geq2$, that is, $\rho\leq -1/2$, then}
|\xi|\,|\Psi_{1,\rho-1,1}|, |\Psi_{2,\rho-1,0}| & \lesssim |\xi|^{\rho+1/2} \,\Lambda(s)^{\rho-1}\,\Lambda(t)^{-1/2} \lesssim \Lambda(s)^{\rho-1}\,\Lambda(t)^{-\rho-1} \,,\\
|\xi| \,|\Psi_{0,\rho,0}|, |\Psi_{1,\rho,-1}| & \lesssim |\xi|^{\rho+1/2}\,\Lambda(s)^{\rho}\,\Lambda(t)^{-1/2} \lesssim \Lambda(s)^{\rho}\,\Lambda(t)^{-\rho-1} \,.
\end{align*}
In the interval~$I_3$ it holds $K\geq\tau\geq\sigma$. We use~\eqref{eq:PsiI} and~\eqref{eq:I}, obtaining
\begin{align*}
|\Psi_{k,r,\delta}(t,s,|\xi|)|
	& \lesssim |\xi|^k\,\left( (\Lambda(s)|\xi|)^{-r}\,(\Lambda(t)|\xi|)^{r+\delta} + (\Lambda(s)|\xi|)^{r}\,(\Lambda(t)|\xi|)^{-(r+\delta)} \right) \\
	& = |\xi|^{k+\delta} \, \Lambda(s)^{-r}\,\Lambda(t)^{r+\delta} + |\xi|^{k-\delta} \, \Lambda(s)^{r}\,\Lambda(t)^{-(r+\delta)}\\
	& \lesssim \Lambda(s)^{-r}\,\Lambda(t)^{r-k} + \Lambda(s)^{r}\,\Lambda(t)^{-r-k} \lesssim \Lambda(s)^{-|r|}\,\Lambda(t)^{|r|-k}\,,
\end{align*}
provided that $k\geq |\delta|$, since $|\xi|\lesssim \Lambda(t)^{-1}$ and $\Lambda(s)\leq \Lambda(t)$. Since~$\rho\leq0$, using $|\xi|\lesssim \Lambda(t)^{-1}$ where needed, it follows again
\begin{align*}
|\Psi_{1,\rho-1,1}| & \lesssim |\xi| \Lambda(s)^{\rho-1}\,\Lambda(t)^{1-\rho} \lesssim \Lambda(s)^{\rho-1}\,\Lambda(t)^{-\rho} \,,\\
|\Psi_{0,\rho,0}| & \lesssim \Lambda(s)^{\rho}\,\Lambda(t)^{-\rho}\,,\\
|\xi|\,|\Psi_{1,\rho-1,1}|, |\Psi_{2,\rho-1,0}| & \lesssim |\xi|^2\,\Lambda(s)^{\rho-1}\,\Lambda(t)^{1-\rho} \lesssim \Lambda(s)^{\rho-1}\,\Lambda(t)^{-\rho-1} \,,\\
|\xi| \,|\Psi_{0,\rho,0}|, |\Psi_{1,\rho,-1}| & \lesssim |\xi|\,\Lambda(s)^{\rho}\,\Lambda(t)^{-\rho}\lesssim \Lambda(s)^{\rho}\,\Lambda(t)^{-\rho-1} \,.
\end{align*}
Using $\Lambda(s) \leq \Lambda(t)$ and $\rho \leq 1/2$, in $I_1$ we also have
\begin{gather*}
\Lambda(s)^{-1/2}\,\Lambda(t)^{-1/2} \leq \Lambda(s)^{\rho-1}\,\Lambda(t)^{-\rho} \,, \\
\Lambda(s)^{1/2}\,\Lambda(t)^{-1/2} \leq \Lambda(s)^{\rho}\,\Lambda(t)^{-\rho}\,.
\end{gather*}
Summarizing and recalling \eqref{eq:Phi0}-\eqref{eq:Phi1}, estimate~\eqref{eq:decayv2} follows. If $\rho\leq-1/2$, that is, $\mu\geq2$, then
\[ \Lambda(s)^{-1/2}\,\Lambda(t)^{-1/2} \leq \Lambda(s)^{\rho}\,\Lambda(t)^{-\rho-1} \,. \]
Recalling \eqref{eq:Phi0}-\eqref{eq:Phi1}-\eqref{eq:Phi0t}-\eqref{eq:Phi1t}, the proof of \eqref{eq:decayenv2} follows.
\end{proof}
\begin{proof}[Proof of Lemma~\ref{Lem:CPlinm}]
We follow the proof of Lemma~\ref{Lem:CPlin2} with some modifications. In $I_1$ we notice that
\begin{gather*}
\frac{\Lambda(t)^\rho}{\Lambda(s)^{\rho-1}} \, \Lambda(s)^{1/2}\,\Lambda(t)^{-1/2} = \Lambda(s)^{\frac\mu2+1} \, \Lambda(t)^{-\frac\mu2} \,,\\
\frac{\Lambda(t)^\rho}{\Lambda(s)^{\rho-1}} \, \Lambda(s)^{-1/2}\,\Lambda(t)^{-1/2} = \Lambda(s)^{\frac\mu2} \, \Lambda(t)^{-\frac\mu2} \,.
\end{gather*}
Moreover, since~$\Lambda(s)\leq\Lambda(t)$ we may estimate
\begin{align*}
\Lambda(s)^{\frac\mu2+1} \, \Lambda(t)^{-\frac\mu2}
    & \leq \Lambda(t)^{-n\left(\frac1m-\frac12\right)}\Lambda(s)^{1+n\left(\frac1m-\frac12\right)} \quad \text{if~$\mu\geq n(2/m-1)$,} \\
\Lambda(s)^{\frac\mu2} \, \Lambda(t)^{-\frac\mu2}
    & \leq \begin{cases}
\Lambda(t)^{-n\left(\frac1m-\frac12\right)}\Lambda(s)^{n\left(\frac1m-\frac12\right)} & \text{if~$\mu\geq n(2/m-1)$,}\\
\Lambda(t)^{-n\left(\frac1m-\frac12\right)-1}\Lambda(s)^{n\left(\frac1m-\frac12\right)+1} & \text{if~$\mu\geq 2+n(2/m-1)$.}
\end{cases}
\end{align*}
Let us define $q\doteq(1/m-1/2)^{-1} \in [2,\infty)$. By virtue of Parseval's identity, we may now estimate
\[ \|v(t,s,\cdot)\|_{L^2} \lesssim \sum_{j=0}^1 \left(\|\Phi_j(t,s,\xi)\|_{L^\infty(I_1)}\, \|v_j(t,s,\cdot)\|_{L^2} + \|\Phi_j(t,s,\xi)\|_{L^q(I_2\cup I_3)}\, \|v_j(t,s,\cdot)\|_{L^m} \right) \,, \]
and similarly for the energy. Let
\[ J_2^\pm \doteq \int_{|\xi|\in I_2} |\xi|^{q\,(\rho\pm1/2)} d\xi\,,\qquad J_3^\pm \doteq \int_{|\xi|\in I_3} |\xi|^{q\,(j+k\pm\delta)} d\xi\,, \]
and~$\eta \doteq \Lambda(t)|\xi|$. It follows
\begin{align*}
J_2^\pm
	& \lesssim \Lambda(t)^{-q\,(\rho\pm1/2)-n} \int_{|\eta|\geq K} |\eta|^{q\,(\rho\pm1/2)} d\eta \lesssim \Lambda(t)^{-q\,(\rho\pm1/2)-n} \,, \\
J_3^\pm
	& \lesssim \Lambda(t)^{-q\,(j+k\pm\delta)-n} \int_{|\eta|\leq K} |\eta|^{q\,(j+k\pm\delta)} d\eta \lesssim \Lambda(t)^{-q\,(j+k\pm\delta)-n} \,,
\end{align*}
provided that $q\,(\rho\pm1/2) < -n$ and that $j+k\pm\delta > -n$. Therefore we obtain
\begin{align*}
\|\Psi_{1,\rho-1,1}\|_{L^q(I_2\cup I_3)}
	& \lesssim \Lambda(s)^{\rho-1}\,\Lambda(t)^{-\rho-n/q}\,, \\
\|\Psi_{0,\rho,0}\|_{L^q(I_2\cup I_3)}
	& \lesssim \Lambda(s)^{\rho}\,\Lambda(t)^{-\rho-n/q}\,,
\intertext{provided that $\rho-1/2<-n/q$, that is, $\mu>2n(1/m-1/2)$, and}
\|(\xi\,\Psi_{1,\rho-1,1},\Psi_{2,\rho-1,0})\|_{L^q(I_2\cup I_3)}
	& \lesssim \Lambda(s)^{\rho-1}\,\Lambda(t)^{-\rho-1-n/q}\,,\\
\| (\xi \,\Psi_{0,\rho,0},\Psi_{1,\rho,-1}) \|_{L^q(I_2\cup I_3)}
	& \lesssim \Lambda(s)^{\rho}\,\Lambda(t)^{-\rho-1-n/q}\,,
\end{align*}
provided that $\rho+1/2<-n/q$, i.e. $\mu>2+2n(1/m-1/2)$. If $\mu= 1 + n(2/m-1)\pm1$, the estimate of~$J_2^\pm$ gives
\[ |J_2^\pm| \leq C_n \left( \log (K/\Lambda(s))-\log (K/\Lambda(t))\right) \,,\]
Combining the estimates for high and low frequencies, we conclude the proof. 
\end{proof}


\section{Proof of Theorems~\ref{Thm:L2} and~\ref{Thm:L1m}}\label{sec:well}

We will use the linear estimates~\eqref{eq:decayvm} and~\eqref{eq:decayenvm} 
to prove~\eqref{eq:uLm} and~\eqref{eq:enLm} for~$\mu>2+n(2/m-1)$. The special case~$\mu=2+n(2/m-1)$ can be easily proved by replacing estimate~\eqref{eq:decayenvm} with~\eqref{eq:decayenvcrit}, whereas estimates~\eqref{eq:uL2} and~\eqref{eq:enL2} follow from~\eqref{eq:decayv2} and~\eqref{eq:decayenv2}.

Using Duhamel's principle and~\eqref{eq:vMult}, a function~$u\in\mathcal{C}([0,\infty),H^1)\cap\mathcal{C}^1([0,\infty),L^2)$ is a solution to~\eqref{eq:diss} if, and only if, it is a fixed point for the operator~$N$ given by
\begin{equation}\label{eq:Nw}
Nu(t,x) = E_0(t,0,x)\ast_{(x)} u_0(x) + E_1(t,0,x)\ast_{(x)} u_1(x) + \int_0^t E_1(t,s,x)\ast_{(x)} f(s,u(s,x))\,ds\,,
\end{equation}
i.e. $Nu(t,\cdot)=u(t,\cdot)$ in~$H^1$ and $\partial_t Nu(t,\cdot)=u_t(t,\cdot)$ in~$L^2$, for any~$t\in[0,\infty)$. For any~$t\geq0$, we consider the spaces
\[ X(t)\doteq \mathcal{C}([0,t],H^1)\cap\mathcal{C}^1([0,t],L^2) \,, \qquad X_0(t)=\mathcal{C}([0,t],H^1)\,,\]
with the norms
\begin{align*}
\|w\|_{X(t)}
    & \doteq \sup_{0\leq \tau\leq t} \Lambda(\tau)^{n(1/m-1/2)} \Bigl( \|w(\tau,\cdot)\|_{L^2} + \Lambda(\tau) \|\nabla w(\tau,\cdot)\|_{L^2} + \lambda(\tau)^{-1}\Lambda(\tau) \|w_t(\tau,\cdot)\|_{L^2} \bigr)\,,\\
\|w\|_{X_0(t)}
    & \doteq \sup_{0\leq \tau\leq t} \Lambda(\tau)^{n(1/m-1/2)} \Bigl( \|w(\tau,\cdot)\|_{L^2} + \Lambda(\tau) \|\nabla w(\tau,\cdot)\|_{L^2} \bigr)\,.
\end{align*}
%
We claim that for any data~$(u_0,u_1)\in \mathcal{D}_m$ the operator~$N$ satisfies the estimates
\begin{align}
\label{eq:well}
\|Nu\|_{X(t)}
    & \leq C\,\|(u_0,u_1)\|_{\mathcal{D}_m} + C\|u\|_{X_0(t)}^p\,, \\
\label{eq:contraction}
\|Nu-N\tilde{u}\|_{X(t)}
    & \leq C\|u-\tilde{u}\|_{X_0(t)} \bigl(\|u\|_{X_0(t)}^{p-1}+\|\tilde{u}\|_{X_0(t)}^{p-1}\bigr)\,,
\end{align}
for any~$u,\tilde{u}\in X(t)$, uniformly with respect to~$t\in [0,\infty)$.
\\
If~\eqref{eq:well} and~\eqref{eq:contraction} hold, then~$N$ maps~$X(t)$ into itself and there exists a unique fixed point~$u\in X(t)$ for the operator~$N$, for sufficiently small data. Indeed, let $\epsilon \,\doteq \|(u_0,u_1)\|_{\mathcal{D}_m}$, and let us define the sequence~$u^{(j)}= Nu^{(j-1)}$ for any~$j\geq1$, with~$u^{(0)}= 0$. Thanks to~\eqref{eq:well}, there exists~$\epsilon_0=\epsilon_0(C)>0$, such that
\begin{equation}\label{eq:recurrence}
\|u^{(j)}\|_{X(t)}\le 2C\epsilon,
\end{equation}
for any $\epsilon \in[0,\epsilon_0]$. Moreover, let us fix~$\epsilon_0(C)$ be such that~$C\epsilon_0^{p-1}<1$. Using~\eqref{eq:contraction} and~\eqref{eq:recurrence}, we obtain
\begin{equation}\label{eq:contr}
\|u^{(j+1)}-u^{(j)}\|_{X(t)} \le C\epsilon^{p-1} \|u^{(j)}-u^{(j-1)}\|_{X(t)} \,,
\end{equation}
therefore $\{u^{(j)} \}$ is a Cauchy sequence in the Banach space $X(t)$, converging to the unique solution of $Nu=u$. Since the constants are independent of~$t$, the global existence follows. The definition of~$\|u\|_{X(t)}$ leads to the decay estimates \eqref{eq:uLm}-\eqref{eq:enLm}.

Therefore, we only need to prove our claims~\eqref{eq:well} and~\eqref{eq:contraction}. During the proof a special role will be played by different applications of Gagliardo-Nirenberg inequality:
\begin{align}
\label{eq:GN}
\|u(s,\cdot)\|_{L^q}^p
    & \lesssim \|u(s,\cdot)\|_{L^2}^{p(1-\theta(q))}\,\|\nabla u(s,\cdot)\|_{L^2}^{p\theta(q)}, \qquad \text{where}\\
\label{eq:thetaq}
\theta(q)
    & \doteq n\left(\frac12-\frac1q\right)\,, \qquad \text{for any} \quad q\in\left[2\,,\ \frac{2n}{n-2}\right]\,.
\end{align}
We prove~\eqref{eq:well}, being the proof of~\eqref{eq:contr} completely analogous.
\begin{proof}[Proof of~\eqref{eq:well}]
From \eqref{eq:decayvm}-\eqref{eq:decayenvm} we derive
\begin{align}
\nonumber
\|Nu(t,\cdot)\|_{L^2}
    & \lesssim \Lambda(t)^{-n(\frac1m-\frac12)}\,\|(u_0,u_1)\|_{L^m\times L^2}\\
\nonumber
    & \quad + \Lambda(t)^{-n(\frac1m-\frac12)}\int_0^t \lambda(s)^{-1}\,\Lambda(s) \,\|f(s,u(s,\cdot))\|_{L^m}ds \\
\label{eq:Nu}
    & \quad + \Lambda(t)^{-n(\frac1m-\frac12)} \int_0^t \lambda(s)^{-1}\,\Lambda(s)^{1+n(\frac1m-\frac12)}\,\|f(s,u(s,\cdot))\|_{L^2}ds\,,\\
\nonumber
\|(\lambda\nabla Nu,\partial_t Nu)(t,\cdot)\|_{L^2}
    & \lesssim \lambda(t)\,\Lambda(t)^{-n(\frac1m-\frac12)-1}\,\|(u_0,u_1)\|_{\mathcal{D}_m}\\
\nonumber
    & \quad + \lambda(t)\,\Lambda(t)^{-n(\frac1m-\frac12)-1}\int_0^t \lambda(s)^{-1}\,\Lambda(s) \,\|f(s,u(s,\cdot))\|_{L^m}ds \\
\label{eq:Nenu}
    & \quad + \lambda(t)\,\Lambda(t)^{-n(\frac1m-\frac12)-1} \int_0^t \lambda(s)^{-1}\,\Lambda(s)^{1+n(\frac1m-\frac12)}\,\|f(s,u(s,\cdot))\|_{L^2}ds\,.
\end{align}
By using~\eqref{eq:disscontr} we can estimate $|f(s,u)|\lesssim \lambda(s)^2\Lambda(s)^\gamma\,|u|^p$. Since $p\geq 2/m$, and~$p\leq n/(n-2)$ if~$n\geq3$, we can apply~\eqref{eq:GN} with~$q=mp$ and~$q=2p$, obtaining
\begin{align}
\label{eq:fusGNlowp}
\||u(s,\cdot)|^p\|_{L^m}
    & \lesssim \|u(s,\cdot)\|_{L^{mp}}^p \lesssim \|u\|_{X_0(s)}^p \Lambda(s)^{-p(n(1/m-1/2)+\theta(mp))}= \|u\|_{X_0(s)}^p \Lambda(s)^{-\frac{n}m(p-1)}\,,\\
\label{eq:fusGNlow2p}
\||u(s,\cdot)|^p\|_{L^2}
    & \lesssim \|u(s,\cdot)\|_{L^{2p}}^p \lesssim \|u\|_{X_0(s)}^p \Lambda(s)^{-p(n(1/m-1/2)+\theta(2p))} = \|u\|_{X_0(s)}^p \Lambda(s)^{-\frac{pn}m+\frac{n}2}\,.
\end{align}
We notice that:
\[ 1+n\left(\frac1m-\frac12\right)-\frac{pn}m+\frac{n}2+\gamma = 1-\frac{n}m(p-1)+\gamma\,, \]
hence
\begin{align}
\nonumber
\|Nu(t,\cdot)\|_{L^2}
    & \lesssim \Lambda(t)^{-n(\frac1m-\frac12)}\,\|(u_0,u_1)\|_{L^m\cap L^2}\\
\label{eq:Nu2}
    & \qquad + \|u\|_{X_0(t)}^p\, \Lambda(t)^{-n(\frac1m-\frac12)}\int_0^t \lambda(s)\,\Lambda(s)^{1-\frac{n}m(p-1)+\gamma}ds \\
\nonumber
\|(\lambda\nabla Nu,\partial_t Nu)(t,\cdot)\|_{L^2}
    & \lesssim \lambda(t)\,\Lambda(t)^{-n(\frac1m-\frac12)-1}\,\|(u_0,u_1)\|_{\mathcal{D}_m}\\
\label{eq:Nenu2}
    & \qquad + \|u\|_{X_0(t)}^p\, \lambda(t)\,\Lambda(t)^{-n(\frac1m-\frac12)-1}\int_0^t \lambda(s)\,\Lambda(s)^{1-\frac{n}m(p-1)+\gamma} \,ds\,.
\end{align}
%
%
Thanks to~\eqref{eq:p}, if we put~$r=\Lambda(s)$ then we get
\[ \int_0^t \lambda(s)\,\Lambda(s)^{1-\frac{n}m(p-1)+\gamma} \,ds = \int_{\Lambda(0)}^{\Lambda(t)} r^{1-\frac{n}m(p-1)+\gamma} \,dr \leq C \,, \]
and this concludes the proof of~\eqref{eq:well}.
\end{proof}
%


\section{Data from a weighted energy space}

If~$f=f(u)$, we may overcome the lower bound~$p\geq2$ in Theorem~\ref{Thm:1L1} if we assume smallness of the initial data in some weighted energy space. Similarly in Theorem~\ref{Thm:L1m} with~$m=1$.
\\
Let~$\lambda(t)$ and~$b(t)$ satisfy Hypothesis~\ref{Hyp:lambda}. For any~$t\geq0$, we define the exponential weight
\begin{equation}\label{eq:omega}
\omega_{(t)}(x)\doteq \exp \left( \frac\mu2 \, \frac{|x|^2}{\Lambda(t)^2} \right) \,,
\end{equation}
and we denote by~$L^2(\omega_{(t)})$ and~$H^1(\omega_{(t)})$ the weighted spaces with norms:
\[ \|u\|_{L^2(\omega_{(t)})}^2 \doteq \int_{\R^n} |u(x)|^2\,\omega_{(t)}^2(x)\,dx\,, \qquad \|u\|_{H^1(\omega_{(t)})}^2 = \|u\|_{L^2(\omega_{(t)})}^2+\|\nabla u\|_{L^2(\omega_{(t)})}^2\,. \]
One may easily check that~$L^2(\omega_{(t)})\hookrightarrow L^1\cap L^2$, for any~$\mu>0$ and~$t\geq0$.
\begin{Thm}\label{Thm:weight}
Let~$n\geq1$, $\mu \geq n+2$. Let~$f(t,u)=\lambda(t)^2f_1(u)$, with~$f_1(u)$ satisfying
%
\[ f_1(0)=0\,, \qquad |f_1(u)-f_1(v)|\lesssim |u-v|(|u|+|v|)^{p-1} \,,\]
%
for some~$p>1+2/n$, and~$p\leq 1+2/(n-2)$ if~$n\geq3$. Then there exists~$\epsilon>0$ such that for any initial data
\begin{equation}\label{eq:dataweight}
(u_0,u_1)\in H^1(\omega_{(0)})\times L^2(\omega_{(0)})\,, \qquad \text{satisfying} \quad \|(u_0,u_1)\|_{H^1(\omega_{(0)})\times L^2(\omega_{(0)})} \leq \epsilon \,,
\end{equation}
there exists a solution~$u$ to~\eqref{eq:diss}. Moreover, $u\in \mathcal{C}([0,\infty),H^1(\omega_{(t)}))\cap\mathcal{C}^1([0,\infty), L^2(\omega_{(t)}))$, and
\begin{align*}
\|u(t,\cdot)\|_{L^2}
    & \lesssim \Lambda(t)^{-\frac{n}2} \, \|(u_0,u_1)\|_{L^2(\omega_{(0)})} \,, \\
\|(\lambda\nabla u,u_t)(t,\cdot)\|_{L^2}
    & \lesssim \lambda(t)\Lambda(t)^{-\frac{n}2-1} \, \|(u_0,u_1)\|_{H^1(\omega_{(0)})\times L^2(\omega_{(0)})} \,,\\
\|u(t,\cdot)\|_{L^2(\omega_{(t)})}
    & \lesssim\lambda(t)\,\Lambda(t)\, \|(u_0,u_1)\|_{H^1(\omega_{(0)})\times L^2(\omega_{(0)})} \,,\\
\|(\lambda\nabla u,u_t)(t,\cdot)\|_{L^2(\omega_{(t)})}
    & \lesssim \lambda(t)\, \|(u_0,u_1)\|_{H^1(\omega_{(0)})\times L^2(\omega_{(0)})} \,.
\end{align*}
\end{Thm}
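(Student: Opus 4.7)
The strategy mirrors the proof of Theorem~\ref{Thm:L1m}: combine linear estimates with a Banach fixed-point argument based on Duhamel's principle. The novelty is that the iteration space must include the weighted norms $L^2(\omega_{(t)})$ of $u$ and of its energy. These are what allow us to bypass the lower bound $p\geq 2$ that was needed in Theorem~\ref{Thm:1L1}: indeed, Cauchy--Schwarz gives
\[ \|u(t,\cdot)\|_{L^1} \leq \|\omega_{(t)}^{-1}\|_{L^2}\,\|u(t,\cdot)\|_{L^2(\omega_{(t)})} \lesssim \Lambda(t)^{n/2}\,\|u(t,\cdot)\|_{L^2(\omega_{(t)})}, \]
so that weighted $L^2$-control encodes $L^1$-information, which can be combined with interpolation and weighted Gagliardo--Nirenberg inequalities \emph{à la} Todorova--Yordanov to estimate $\||u|^p\|_{L^1}$ in the range $p\in(1+2/n,2)$ as well.

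The first and main step is to derive weighted linear estimates for the homogeneous Cauchy problem \eqref{eq:CPlin}, which complement those of Section~\ref{sec:linear}. The natural approach is to build a Lyapunov functional
\[ \mathcal{E}_w(t) \doteq \int_{\R^n}\Bigl( \tfrac12\,|v_t|^2 + \tfrac12\,\lambda(t)^2\,|\nabla v|^2 + \kappa_1\,\tfrac{\lambda(t)}{\Lambda(t)}\,v\,v_t + \kappa_2\,\tfrac{\lambda(t)^2}{\Lambda(t)^2}\,|v|^2 \Bigr)\,\omega_{(t)}^2\,dx, \]
for suitable $\kappa_1,\kappa_2>0$, and differentiate in time using
\[ \nabla\omega_{(t)} = \mu\,\frac{x}{\Lambda(t)^2}\,\omega_{(t)}, \qquad \partial_t\omega_{(t)} = -\mu\,\frac{\lambda(t)\,|x|^2}{\Lambda(t)^3}\,\omega_{(t)}, \]
together with the algebraic identity $b(t) = \mu\lambda(t)/\Lambda(t) - \lambda'(t)/\lambda(t)$ from Hypothesis~\ref{Hyp:lambda}. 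After Young's inequality on the cross terms carrying $|x|^2\,\omega_{(t)}^2$, the residual quadratic form is non-negative precisely when $\mu\geq n+2$, which pins down the assumption on $\mu$. A Gronwall-type argument then yields
\[ \|v(t,\cdot)\|_{L^2(\omega_{(t)})}\lesssim \lambda(t)\,\Lambda(t)\,\mathcal{N}, \qquad \|(\lambda\nabla v,v_t)(t,\cdot)\|_{L^2(\omega_{(t)})}\lesssim \lambda(t)\,\mathcal{N}, \]
with $\mathcal{N}\doteq \|v_0\|_{H^1(\omega_{(0)})}+\lambda(0)^{-1}\|v_1\|_{L^2(\omega_{(0)})}$.

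Next, the fixed-point argument is set up on the Banach space with norm
\begin{align*}
\|u\|_{Y(t)} \doteq \sup_{0\leq\tau\leq t}\Bigl\{
	& \Lambda(\tau)^{n/2}\,\|u\|_{L^2} + \lambda(\tau)^{-1}\,\Lambda(\tau)^{n/2+1}\,\|(\lambda\nabla u,u_t)\|_{L^2} \\
	& + \lambda(\tau)^{-1}\,\Lambda(\tau)^{-1}\,\|u\|_{L^2(\omega_{(\tau)})} + \lambda(\tau)^{-1}\,\|(\lambda\nabla u,u_t)\|_{L^2(\omega_{(\tau)})} \Bigr\},
\end{align*}
and Banach's fixed-point is applied to the Duhamel operator $N$ of \eqref{eq:Nw}. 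The first two components of $\|Nu\|_{Y(t)}$ are controlled via Lemma~\ref{Lem:CPlinm} with $m=1$, while the last two are controlled by the weighted energy method of Step~1 applied to the semilinear equation, the nonlinear source being absorbed into the Lyapunov functional through weighted Gagliardo--Nirenberg inequalities that exploit the Gaussian shape of $\omega_{(s)}$. This reduces the problem to estimating $\|f_1(u(s,\cdot))\|_{L^1}$ and $\|f_1(u(s,\cdot))\|_{L^2}$ by powers of $\|u\|_{Y(s)}$: the $L^2$ bound coincides with \eqref{eq:fusGNlow2p} for $m=1$ and produces a time integral convergent exactly for $p>1+2/n$, while the $L^1$ bound follows from \eqref{eq:fusGNlowp} when $p\geq2$ and from the weighted embedding displayed above combined with an appropriate interpolation when $p\in(1,2)$, again convergent under the threshold $p>1+2/n$.

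The main obstacle is Step~1. Both the coercivity of the quadratic form arising from $\frac{d}{dt}\mathcal{E}_w$ under the sharp assumption $\mu\geq n+2$, and the subsequent absorption of the semilinear source in the weighted energy identity, demand a careful balance of Young's inequality parameters against the $\lambda,\Lambda$-weights prescribed by Hypothesis~\ref{Hyp:lambda}. Once Step~1 is in place, the contraction estimate analogous to \eqref{eq:contraction} follows routinely from the Lipschitz assumption on $f_1$, and the global solution together with all four decay estimates in the statement is produced by the standard iteration scheme of Section~\ref{sec:well}.
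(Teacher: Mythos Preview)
Your overall plan---weighted energy for the $(\lambda\nabla u,u_t)$ part of the norm, Lemma~\ref{Lem:CPlinm} with $m=1$ for the unweighted part, and a weighted Gagliardo--Nirenberg inequality to close---is the right skeleton and matches the paper. Two points deserve correction.

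First, the paper does not run a Banach fixed-point in the weighted space. It argues by continuation: local existence in $\mathcal{C}([0,T_{\max}),H^1(\omega_{(t)}))\cap\mathcal{C}^1([0,T_{\max}),L^2(\omega_{(t)}))$ is taken for granted, and one shows that the norm $\|u\|_{X(t)}$ (your $\|u\|_{Y(t)}$, essentially) stays bounded uniformly in $t<T_{\max}$, contradicting the blow-up alternative. More importantly, the weighted energy estimate (Lemma~\ref{Lem:weighten}) is proved directly on the \emph{nonlinear} equation, not on the homogeneous linear problem followed by Duhamel. The functional is simply
\[ G(t)=\frac1{\lambda(t)^2}\|(\lambda\nabla u,u_t)(t,\cdot)\|_{L^2(\omega_{(t)})}^2-\int_{\R^n}e^{2\psi}F(u)\,dx, \]
with no correction terms $\kappa_1\,vv_t$ or $\kappa_2\,|v|^2$; the computation closes cleanly because of the exact identity $\mu\,(\lambda/\Lambda)\,\psi_t=-|\lambda\nabla\psi|^2$ (equation~\eqref{eq:weightprop}), not through Young's inequality on $|x|^2\omega^2$-terms.

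Second---and this is the genuine gap---your claim that the threshold $\mu\geq n+2$ is ``precisely'' where the residual quadratic form in $\frac{d}{dt}\mathcal{E}_w$ becomes non-negative is wrong. The weighted energy estimate in Lemma~\ref{Lem:weighten} holds for every $\mu>0$; the only use of $\mu$ in its proof is the sign $\psi_t\leq0$ from~\eqref{eq:weightprop}. The restriction $\mu\geq n+2$ enters \emph{only} through the unweighted linear estimates: it is the condition $\mu\geq 2+n(2/m-1)$ of Lemma~\ref{Lem:CPlinm} at $m=1$, needed to obtain the decay rate $\Lambda(t)^{-n/2-1}$ for the unweighted energy in~\eqref{eq:normweightB}. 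Since you identify this step as ``the main obstacle'' and locate it in the wrong place, the argument as written does not explain why the theorem holds under the stated hypothesis on~$\mu$.
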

%
The range of admissible exponents~$p$ for the global existence in Theorem~\ref{Thm:weight} is nonempty for any~$n\geq1$. 
%
\\
If we consider~\eqref{eq:diss1}, then we assume~$f=f(u)$, and the weight is given by
\[ \omega_{(t)}(x) \doteq \exp \left( \frac\mu2 \, \frac{|x|^2}{1+t^2} \right) \,. \]
%
%
By assuming compactly supported data, Y. Wakasugi recently extended the result in~\cite{LNZ} to prove that if~$f(u)=|u|^p$ with~$p>1+2/n$ then there exists~$\overline{\mu}=\overline{\mu}(p,n)$ satisfying $\overline{\mu}(p,n)\approx n^2\,(p-(1+2/n))^{-2}$ such that for any~$\mu\geq\overline{\mu}$ there exists a global solution to~\eqref{eq:diss1}. A loss of information in the decay estimates like $(1+t)^\epsilon$ also appears, where~$\epsilon\approx \mu^{-1}$ (see~\cite{Waka}). We remark that in Theorem~\ref{Thm:weight} we do not require compact support, the threshold is~$\mu\geq n+2$ for any~$p>1+2/n$, and we do not have loss of information in the decay estimates with respect to the linear problem. Moreover, we can deal with a more general propagation speed~$\lambda(t)$.


In order to prove Theorem~\ref{Thm:weight}, we follow the approach in~\cite{DALR, IT}. For the sake of brevity, we only sketch the main ideas, highlighting the differences due to the presence of the propagation speed~$\lambda(t)$.

One can easily prove the local existence of the solution to~\eqref{eq:diss} in
\[ \mathcal{C}\bigl([0,T_{\max}),H^1(\omega_{(t)})\bigr)\cap \mathcal{C}\bigl([0,T_{\max}),L^2(\omega_{(t)})\bigr)\,,\]
for any~$p\leq 1+2/(n-2)$, where by~$T_{\max}>0$ we denote the maximal existence time. Moreover,
\begin{equation}\label{eq:blowup}
\limsup_{t\to T_{\max}} \left( \|u(t,\cdot)\|_{H_1(\omega_{(t)})}^2 + \lambda(t)^{-2}\,\|u_t(t,\cdot)\|_{L^2(\omega_{(t)})}^2\right) = \infty\,,
\end{equation}
if~$T_{\max}<\infty$. Let us define the function
\[ \psi(t,x)\doteq \log \omega_{(t)}(x)= \frac\mu2\,\frac{|x|^2}{\Lambda(t)^2}\,,\]
which has the following property:
\begin{equation}\label{eq:weightprop}
\mu\,\frac{\lambda(t)}{\Lambda(t)}\,\psi_t(t,x) = - |\lambda(t)\nabla \psi(t,x)|^2 \,, \qquad \text{in particular $\psi_t(t,x)\leq0$ since~$\mu\geq0$.}
\end{equation}
We are now in a position to prove the following.
\begin{Lem}\label{Lem:weighten}
Let~$u$ be the local solution to~\eqref{eq:diss}. 
Then for any~$t\in[0,T_{\max})$ and for any~$\varepsilon\in(0,2-2/(p+1))$, the following energy estimate holds:
\begin{align*}
\|(\lambda\nabla u, u_t)(t,\cdot)\|_{L^2(\omega_{(t)})}^2
    & \leq C\, \lambda(t)^2 \,\left(\|(u_0,u_1)\|_{H^1(\omega_{(0)})\times L^2(\omega_{(0)})}^2 + \|(u_0,u_1)\|_{H^1(\omega_{(0)})\times L^2(\omega_{(0)})}^{\frac{p+1}2}\right) \\
    & \qquad + C_\varepsilon\,\lambda(t)^2 \sup_{s\in[0,t]} \Bigl( \Lambda(s)^\varepsilon \|e^{(\varepsilon+2/(p+1))\psi(s,\cdot)} u(s,\cdot)\|_{L^{p+1}} \Bigr)^{p+1} \,.
\end{align*}
\end{Lem}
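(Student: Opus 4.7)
The plan is to derive a differential inequality for a weighted natural energy and then integrate in time, using the structural identity (\ref{eq:weightprop}) to absorb the terms arising from the weight.

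\textbf{Setup and energy identity.} Introduce the normalized weighted energy
\[ \mathcal{E}(t) \doteq \frac12 \int_{\R^n} \bigl(\lambda(t)^{-2}\,|u_t|^2 + |\nabla u|^2 \bigr)\, e^{2\psi(t,x)}\,dx, \]
so that $2\lambda(t)^2\mathcal{E}(t) = \|(\lambda\nabla u,u_t)(t,\cdot)\|_{L^2(\omega_{(t)})}^2$. Differentiate, substitute $u_{tt} = \lambda^2 \Delta u - b u_t + \lambda^2 f_1(u)$ from the equation, and integrate by parts in $\int u_t\Delta u\, e^{2\psi}\,dx$: the $\nabla u\cdot\nabla u_t$ terms cancel, leaving a cross term $-2\int u_t\nabla\psi\cdot\nabla u\, e^{2\psi}\,dx$. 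The coefficient $-\lambda^{-3}\lambda'$ from $\partial_t\lambda^{-2}$ combines with $-\lambda^{-2}b$ to yield exactly $-\mu/(\lambda\Lambda)$, by the structure (\ref{eq:bmuprime}) of $b$.

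\textbf{Absorbing the cross term.} Apply Young's inequality
\[ |2u_t\nabla\psi\cdot\nabla u| \leq \frac{\mu}{\lambda\Lambda}\,u_t^2 + \frac{\lambda\Lambda}{\mu}\,|\nabla\psi|^2\,|\nabla u|^2, \]
and use (\ref{eq:weightprop}) in the form $|\lambda\nabla\psi|^2 = -\mu(\lambda/\Lambda)\psi_t$ to rewrite the second summand as $-\psi_t|\nabla u|^2$. The $\mu/(\lambda\Lambda)\,u_t^2$ terms then exactly cancel, while the remaining pieces are nonpositive multiples of $\psi_t$ (recall $\psi_t\le0$) and may be dropped. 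One obtains
\[ \frac{d}{dt}\mathcal{E}(t) \leq \int u_t\, f_1(u)\, e^{2\psi}\,dx. \]

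\textbf{Nonlinear term and time integration.} Set $F(u)\doteq\int_0^u f_1(v)\,dv$, so that $|F(u)|\lesssim|u|^{p+1}$ and $u_t f_1(u)=\partial_t F(u)$. Integrate by parts in $s$:
\[ \int_0^t\!\!\int u_t f_1(u) e^{2\psi}\,dx\,ds = \Bigl[\int F(u) e^{2\psi}\,dx\Bigr]_0^t - 2\int_0^t\!\!\int F(u)\,\psi_t\, e^{2\psi}\,dx\,ds. \]
Using $|\psi_t|=2(\lambda/\Lambda)\psi$ and the elementary bound $\psi^j e^{-\varepsilon(p+1)\psi}\lesssim_{\varepsilon,j} 1$ for $\psi\ge 0$, each integrand is dominated by $|u|^{p+1} e^{c(p+1)\psi}$ with $c=\varepsilon+2/(p+1)$, i.e.\ by $\|e^{c\psi}u\|_{L^{p+1}}^{p+1}$ (with an extra $\lambda/\Lambda$ factor for the $\psi_t$ term). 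Extract the sup via the change of variable $r=\Lambda(s)$:
\[ \int_0^t \frac{\lambda(s)}{\Lambda(s)^{\varepsilon(p+1)+1}}\,ds = \int_{\Lambda(0)}^{\Lambda(t)} r^{-\varepsilon(p+1)-1}\,dr \lesssim_\varepsilon 1, \]
so the time integral is bounded by $C_\varepsilon\sup_{s\in[0,t]}\bigl(\Lambda(s)^\varepsilon\|e^{c\psi(s)}u(s)\|_{L^{p+1}}\bigr)^{p+1}$. The boundary term at $t$ admits the same bound since $\Lambda(t)\ge\Lambda(0)$. The initial contribution $\mathcal{E}(0)+\int|F(u_0)|e^{2\psi(0)}$ is controlled by a weighted Gagliardo--Nirenberg inequality applied to $u_0\,\omega_{(0)}$, yielding the $\|(u_0,u_1)\|_{H^1(\omega_{(0)})\times L^2(\omega_{(0)})}^2$ contribution from $\mathcal{E}(0)$ and, after Young's inequality on $\|u_0\|_{H^1(\omega_{(0)})}^{p+1}$, the $\|(u_0,u_1)\|_{H^1(\omega_{(0)})\times L^2(\omega_{(0)})}^{(p+1)/2}$ term. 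Multiplying the resulting bound on $\mathcal{E}(t)$ by $2\lambda(t)^2$ concludes the proof.

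\textbf{Main obstacle.} The crux is the exact cancellation in Step~2: the damping coefficient $b$ is fine-tuned by (\ref{eq:bmuprime}) and the weight exponent $\mu/2$ in (\ref{eq:omega}) is chosen precisely so that the algebra collapses all terms except the nonlinearity --- without the identity $|\lambda\nabla\psi|^2=-\mu(\lambda/\Lambda)\psi_t$, the cross term would resist absorption. The strict positivity $\varepsilon>0$ is essential in Step~3, since $\varepsilon=0$ produces $\int_0^t\lambda/\Lambda\,ds=\log(\Lambda(t)/\Lambda(0))$, which diverges as $t\to\infty$.
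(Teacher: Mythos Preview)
Your proof is correct and follows essentially the same strategy as the paper: a weighted energy argument exploiting the structural identity~\eqref{eq:weightprop}, followed by the bound $|\psi_t|e^{-\varepsilon(p+1)\psi}\lesssim\lambda/\Lambda$ and the convergence of $\int\lambda\Lambda^{-1-\varepsilon(p+1)}\,ds$. The only organizational difference is that the paper builds $-\int F(u)e^{2\psi}$ into the functional~$G(t)$ from the outset and obtains an \emph{exact} pointwise identity by completing the square (the term $\frac{e^{2\psi}}{\psi_t}|u_t\nabla\psi-\psi_t\nabla u|^2$), whereas you keep the pure energy~$\mathcal{E}(t)$, absorb the cross term by Young's inequality, and then integrate the nonlinearity by parts in time; the two routes yield the same inequality.

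One small remark: your justification for the exponent $(p+1)/2$ on the initial data (``Young's inequality on $\|u_0\|_{H^1(\omega_{(0)})}^{p+1}$'') is not quite right, since Young cannot lower a power; the paper itself is equally terse here, invoking only ``Sobolev embedding.'' What is actually obtained is a bound by $\|u_0\|_{H^1(\omega_{(0)})}^{p+1}$, which for small data is dominated by the stated power and suffices for the application in Theorem~\ref{Thm:weight}.
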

\begin{proof}
We recall that $f(t,u)=\lambda(t)^2f_1(u)$ in Theorem~\ref{Thm:weight}. If we define the functional
\[ G(t) \doteq \frac1{\lambda(t)^2}\,\|(\lambda\nabla u, u_t)(t,\cdot)\|_{L^2(\omega_{(t)})}^2 - \int_{\R^n} F(u)\,dx \,,\quad \text{where} \ F(u) \doteq \int_0^u f_1(v)\,dv \,,\]
then it follows that
\begin{equation}\label{eq:Gfunct}
G(t)-G(0) \leq -4 \int_0^t \int_{\R^n} \psi_t(s,x)\,e^{2\psi(s,x)}\,F(u(s,x))\,dx\,ds\,.
\end{equation}
Indeed, we have:
\begin{multline*}
\partial_t \left( \frac{e^{2\psi}}2\, \bigl(\lambda(t)^{-2}\,|u_t|^2+|\nabla u|^2-F(u)\bigr) \right) = \nabla\cdot(e^{2\psi}u_t\nabla u) +\lambda(t)^{-2}\psi_te^{2\psi}u_t^2 \\
    + \frac{e^{2\psi}}{\psi_t}|u_t\nabla\psi-\psi_t\nabla u|^2 -\lambda(t)^{-2}\frac{e^{2\psi}}{\psi_t}u_t^2\bigl((b(t)+\lambda'(t)/\lambda(t))\psi_t+|\nabla\psi|^2\bigr)-2\psi_te^{2\psi}F(u)\,.
\end{multline*}
By using divergence theorem and~\eqref{eq:weightprop}, the proof of~\eqref{eq:Gfunct} follows. By using Sobolev embedding, we get
\[ G(0) \lesssim \|(u_0,u_1)\|_{H^1(\omega_{(0)})\times L^2(\omega_{(0)})}^2 + \|(u_0,u_1)\|_{H^1(\omega_{(0)})\times L^2(\omega_{(0)})}^{\frac{p+1}2} \,. \]
Estimating
\[ |\psi_t(s,x)|e^{-\varepsilon(p+1)\psi(s,x)} = 2\,\frac{\lambda(t)}{\Lambda(t)} \psi(s,x)e^{-\varepsilon(p+1)\psi(s,x)} \leq C_\varepsilon\,\frac{\lambda(t)}{\Lambda(t)}\,, \qquad \text{and} \quad \int_0^t \frac{\lambda(s)}{\Lambda(s)^{1+\varepsilon}}\,ds \leq C_\varepsilon\,, \]
and~$|F(u(s,x))| \lesssim |u(s,x)|^{p+1}$ we may conclude the proof.
\end{proof}
The advantage of working with weighted spaces relies in the chance to estimate
\begin{equation}\label{eq:fus1weight}
\|f_1(u(s,\cdot))\|_{L^1} \lesssim \|u(s,\cdot)\|_{L^p}^p \lesssim \Lambda(s)^{\frac{n}2}\,\|e^{\varepsilon\psi(s,\cdot)}u(s,\cdot)\|_{L^{2p}}^p \,,
\end{equation}
by using H\"older inequality and
\[ \int_{\R^n} e^{-\frac{c|x|^2}{\Lambda(s)^2}} dx = \Lambda(s)^n\,\int_{\R^n} e^{-c|y|^2} dy \lesssim \Lambda(s)^n\,. \]
Trivially, we may also estimate
\begin{equation}\label{eq:fus2weight}
\|f_1(u(s,\cdot))\|_{L^2} \lesssim \|e^{\varepsilon\psi(s,\cdot)}u(s,\cdot)\|_{L^{2p}}^p \,.
\end{equation}
\begin{proof}[Proof of Theorem~\ref{Thm:weight}]
By contradiction, let us assume that for any~$\epsilon>0$ there exist data satisfying~\eqref{eq:dataweight} such that the solution to~\eqref{eq:diss} is not global, that is, $T_{\max}<\infty$. Similarly to the proof of Theorem~\ref{Thm:L1m}, for any~$t\in(0,T_{\max})$ we may consider the space
\begin{align}
\nonumber
X(t)
    & \doteq \mathcal{C}\bigl([0,t],H^1(\omega_{(\tau)})\bigr)\cap \mathcal{C}\bigl([0,t],L^2(\omega_{(\tau)})\bigr)\,,\qquad \text{with norm}\\
\label{eq:normweightA}
\|u\|_{X(t)}
    & \doteq \max_{\tau\in[0,t]} \Bigl( \lambda(\tau)^{-1}\|(\lambda\nabla u, u_t)(\tau,\cdot)\|_{L^2(\omega_{(\tau)})} \Bigr. \\
\label{eq:normweightB}
    & \qquad\qquad \Bigl. + \lambda(\tau)^{-1}\Lambda(\tau)^{\frac{n}2+1} \|(\lambda\nabla u,u_t)(\tau,\cdot)\|_{L^2} + \Lambda(\tau)^{\frac{n}2} \|u(\tau,\cdot)\|_{L^2} \Bigr)\,.
\end{align}
We may immediately use Lemma~\ref{Lem:weighten} to estimate the weighted energy in~\eqref{eq:normweightA}. On the other hand, using the linear estimates in Lemma~\ref{Lem:CPlinm} as we did in the proof of Theorem~\ref{Thm:L1m}, together with \eqref{eq:fus1weight}-\eqref{eq:fus2weight}, we can control the terms in~\eqref{eq:normweightB}, obtaining:
\begin{align}
\nonumber
\|u\|_{X(t)} \lesssim \epsilon+\epsilon^{\frac{p+1}2}
    & + \sup_{\tau\in[0,t]} \left(\Lambda(\tau)^\varepsilon \, \|e^{(\varepsilon+2/(p+1))\psi(\tau,\cdot)} u(\tau,\cdot)\|_{L^{p+1}}\right)^{\frac{p+1}2} \\
\label{eq:weightpreliminar}
    & + \sup_{\tau\in[0,t]} \left(\Lambda(\tau)^{\frac{n}2+\varepsilon}\, \|e^{\varepsilon\psi(\tau,\cdot)} u(\tau,\cdot)\|_{L^{2p}}\right)^p\,.
\end{align}
In order to manage the last two terms we use a Gagliardo-Nirenberg type inequality (see Lemma 2.3 in~\cite{IT} and Lemma~9 in~\cite{DALR}) and we get
\begin{equation}\label{eq:GNweight}
\|e^{\sigma\psi(t,\cdot)}v\|_{L^q} \leq C_\sigma\,\Lambda(t)^{1-\theta(q)}\, \|\nabla v\|^{1-\sigma}_{L^2}\,\|e^{\psi(t,\cdot)}\nabla v\|^\sigma_{L^2}\,,
\end{equation}
for any~$\sigma\in [0,1]$ and~$v\in H^1_{\sigma\psi(t,\cdot)}$, where~$\theta(q)$ is as in~\eqref{eq:thetaq}. By using~\eqref{eq:GNweight}, it follows
\begin{align}
\label{eq:GNp1}
\|e^{(\varepsilon+2/(p+1))\psi(\tau,\cdot)}u(\tau,\cdot)\|_{L^{p+1}} & \leq \|u\|_{X(t)} \, \Lambda(\tau)^{1-\theta(p+1)-(1-2/(p+1)-\varepsilon)(n/2+1)}\,,\\
\label{eq:GN2p}
\|e^{\varepsilon\psi(\tau,\cdot)}u(\tau,\cdot)\|_{L^{2p}} & \leq \|u\|_{X(t)} \, \Lambda(\tau)^{1-\theta(2p)-(1-\varepsilon)(n/2+1)}\,.
\end{align}
We remark that~$2< p+1 < 2p\leq 2n/(n-2)$, hence Gagliardo-Nirenberg inequality is applicable. Since~$p>1+2(2+\gamma)/n$, it follows that
\[ 1-\theta(p+1) -(1-2/(p+1))(n/2+1) = 1-\theta(2p)-(n/2+1) = \frac{1-(p-1)n/2}p <0\,. \]
Therefore, if we take~$\varepsilon>0$ sufficiently small, from~\eqref{eq:weightpreliminar} we may obtain
\[ \|u\|_{X(t)} \lesssim \epsilon+\epsilon^{\frac{p+1}2}+\|u\|_{X(t)}^{\frac{p+1}2}+\|u\|_{X(t)}^p\,,\]
uniformly with respect to~$t\in [0,T_{\max})$. By standard arguments, it follows that~$\|u\|_{X(t)}$ is bounded with respect to~$t\in[0,T_{\max})$, provided that~$\epsilon>0$ is sufficiently small. Hence~$\|u(t,\cdot)\|_{L^2(\omega_{(t)})}$ is bounded too. This contradicts~\eqref{eq:blowup}, hence the maximal existence time is~$T_{\max}=\infty$.
\end{proof}




\appendix

\section{Linear estimates under the threshold~$\mu=2$}

If $\mu\in(0,2)$ then the $L^2-L^2$ estimate of the energy of the solution to the linear problem~\eqref{eq:CPlin} is worse than~\eqref{eq:decayenv2}, since the dissipation becomes \emph{non effective} and we get
\begin{equation}\label{eq:decayenvsmall}
\|(\lambda\nabla v,v_t)(t,\cdot)\|_{L^2}\lesssim \lambda(t)\,\Lambda(t)^{-\frac\mu2}\,\Lambda(s)^{\frac\mu2} \, \left(\|v_0\|_{H^1}+\frac1{\lambda(s)}\|v_1\|_{L^2}\right) \,.
\end{equation}
Indeed, we may follow the proof of Lemma~\ref{Lem:CPlinm}, but now~$\rho\in(-1/2,1/2)$. The estimate in $I_1$ remains the same. In $I_2$, using $|\xi|\lesssim \Lambda(s)^{-1}$, we get
\begin{align*}
|\xi|\,|\Psi_{1,\rho-1,1}|, \ |\Psi_{2,\rho-1,0}| & \lesssim |\xi|^{\rho+1/2} \,\Lambda(s)^{\rho-1}\,\Lambda(t)^{-1/2} \lesssim \Lambda(s)^{-3/2}\,\Lambda(t)^{-1/2} \,, \\
|\xi| \,|\Psi_{0,\rho,0}|, \ |\Psi_{1,\rho,-1}| & \lesssim \begin{cases}
|\xi|^{\rho+1/2}\,\Lambda(s)^{\rho}\,\Lambda(t)^{-1/2} \lesssim \Lambda(s)^{-1/2}\,\Lambda(t)^{-1/2} \,, & \text{if $\mu\in(1,2)$,}\\
|\xi|^{1/2-\rho}\,\Lambda(s)^{-\rho}\,\Lambda(t)^{-1/2} \lesssim \Lambda(s)^{-1/2}\,\Lambda(t)^{-1/2} \,, & \text{if $\mu\in(0,1)$,}
\end{cases}
\end{align*}
If~$\rho\in(0,1/2)$, i.e. $\mu\in(0,1)$, 
using~$|\xi|\lesssim \Lambda(t)^{-1}$, we derive
\begin{align*}
|\xi| \,|\Psi_{0,\rho,0}| \lesssim |\xi|\,\Lambda(s)^{-\rho}\,\Lambda(t)^{\rho}
	& \lesssim \Lambda(s)^{-\rho}\,\Lambda(t)^{\rho-1}  \,,\\
|\Psi_{1,\rho,-1}|
	& \lesssim \Lambda(s)^{-\rho}\,\Lambda(t)^{\rho-1}  \,,
\end{align*}
in the interval~$I_3$. Since $|\rho|-1 \leq -1/2$, the worst rate for $|\xi|\,|\Psi_{1,\rho-1,1}|$, $|\Psi_{2,\rho-1,0}|$, $|\xi| \,|\Psi_{0,\rho,0}|$ and $|\Psi_{1,\rho,-1}|$ is now given by $\Lambda(t)^{-1/2}$, therefore, due to
\[ \frac{\Lambda(t)^\rho}{\Lambda(s)^{\rho-1}} \, \Lambda(s)^{-1/2}\,\Lambda(t)^{-1/2} = \Lambda(s)^{\frac\mu2} \,\Lambda(t)^{-\frac\mu2} \,, \]
estimates~\eqref{eq:decayenvsmall} follows. Estimate~\eqref{eq:decayenvsmall} is consistent with the energy estimate proved in Example 3 in~\cite{DAE} for~$s=0$ and~$\mu\in[0,2]$.
\\
One may immediately use estimate~\eqref{eq:decayenvsmall} to extend Theorem~\ref{Thm:L2} to the case~$\mu\in[1,2)$, modifying the proof where needed.
\begin{Rem}\label{Rem:L2small}
Let~$n\geq1$. If~$\mu\in[1,2)$ and
\begin{equation}\label{eq:p2small}
p>1+\frac{4(2+\gamma)}{\mu\,n}\,,
\end{equation}
then there exists~$\epsilon>0$ such that for any initial data satisfying~\eqref{eq:dataL2} there exists a solution to~\eqref{eq:diss}. Moreover, the solution satisfies~\eqref{eq:uL2} and its energy satisfies the estimate
\begin{equation}
\label{eq:enL2small}
\|(\lambda\nabla u,u_t)(t,\cdot)\|_{L^2} \lesssim \lambda(t)\,\Lambda(t)^{-\frac\mu2} \, \|(u_0,u_1)\|_{H^1\times L^2} \,.
\end{equation}
%
However, we do not expect condition~\eqref{eq:p2small} to be optimal. Indeed, for~$\mu\in(0,2)$ the model becomes more \emph{hyperbolic} hence the use of linear $L^2-L^2$ estimates which are analogous to the corresponding heat equation is not meaningful (see~\cite{W04}).
\end{Rem}
A different effect appears if we are interested in estimates of the solution to~\eqref{eq:CPlin}, for~$\mu\in(0,1)$. It is convenient to separate contributions coming from~$v_0$ and~$v_1$. Let~$v_1\equiv0$. If~$v_0\in H^1$ or~$v_0\in L^m\cap H^1$, we still have estimates~\eqref{eq:decayv2} for any~$\mu\geq0$, estimate~\eqref{eq:decayvm} for~$\mu>n(2/m-1)$ and estimate~\eqref{eq:decayvcrit} for~$\mu=n(2/m-1)$. Otherwise, the estimate rate with respect to~$t$ becomes worse.
\begin{Lem}\label{Lem:vsmall}
Let~$\mu\in(0,1)$ and~$v_0\equiv0$. If $v_1\in L^2$ then the solution to~\eqref{eq:CPlin} satisfies the estimate
\begin{equation}
\label{eq:decayvsmall2}
\|v(t,\cdot)\|_{L^2} \lesssim \Lambda(t)^{1-\mu}\, \frac{\Lambda(s)^\mu}{\lambda(s)} \, \|v_1\|_{L^2} \,.
\end{equation}
If $v_1\in L^m\cap L^2$ for some~$m\in[1,2)$ and~$\mu < 2 -n(2/m-1)$, then the solution to~\eqref{eq:CPlin} satisfies the estimate
\begin{align}
\label{eq:decayvsmall}
\|v(t,\cdot)\|_{L^2}
    & \lesssim \Lambda(t)^{(1-\mu)-n\left(\frac1m-\frac12\right)}\,\frac{\Lambda(s)^\mu}{\lambda(s)} \left( \|v_1\|_{L^m} + \Lambda(s)^{n\left(\frac1m-\frac12\right)} \, \|v_1\|_{L^2} \right) \,,
\intertext{whereas if~$\mu = 2 -n(2/m-1)$, it satisfies the estimate}
\label{eq:decayvsmallcrit}
\|v(t,\cdot)\|_{L^2}
    & \lesssim \Lambda(t)^{-\frac\mu2}\, \frac1{\lambda(s)}\,\log\left(1+\frac{\Lambda(t)}{\Lambda(s)}\right)\, \left( \Lambda(s)^\mu\,\|v_1\|_{L^m} + \Lambda(s)^{1+\frac\mu2} \, \|v_1\|_{L^2} \right) \,,
\end{align}
\end{Lem}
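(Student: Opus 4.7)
The plan is to follow the strategy used for Lemmas~\ref{Lem:CPlin2} and~\ref{Lem:CPlinm}, but with the parameter~$\rho=(1-\mu)/2$ now lying in~$(0,1/2)$ because~$\mu\in(0,1)$. Since~$v_0\equiv 0$, only the multiplier~$\Phi_1(t,s,\xi)$ given by~\eqref{eq:Phi1} contributes, so everything reduces to pointwise estimates of~$\Psi_{0,\rho,0}(t,s,|\xi|)$ in the three frequency regions~$I_1$, $I_2$, $I_3$, together with Parseval's identity (for the $L^\infty$ pieces paired with~$\|v_1\|_{L^2}$) and a Hausdorff--Young type argument with~$q=(1/m-1/2)^{-1}$ (for the $L^q$ pieces paired with~$\|v_1\|_{L^m}$).

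For~\eqref{eq:decayvsmall2}, first I would estimate~$\|\Psi_{0,\rho,0}\|_{L^\infty}$ in each region. In~$I_1$, \eqref{eq:Hlarge} gives $|\Psi_{0,\rho,0}|\lesssim \Lambda(s)^{1/2}\Lambda(t)^{-1/2}$. In~$I_2$, combining \eqref{eq:Hlarge} and \eqref{eq:Hsmall} yields $|\Psi_{0,\rho,0}|\lesssim |\xi|^{-(\rho+1/2)}\Lambda(s)^{-\rho}\Lambda(t)^{-1/2}$; since now~$\rho+1/2>0$, the correct choice is to apply $|\xi|\geq \Lambda(t)^{-1}$ (opposite to the sign convention used in the proof of Lemma~\ref{Lem:CPlinm}), producing $|\Psi_{0,\rho,0}|\lesssim \Lambda(s)^{-\rho}\Lambda(t)^{\rho}$. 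In~$I_3$, \eqref{eq:PsiI}--\eqref{eq:I} give the same bound $\Lambda(s)^{-\rho}\Lambda(t)^{\rho}$. Using $\Lambda(s)\leq \Lambda(t)$ and $\rho<1/2$, the $I_1$ bound is dominated by $\Lambda(s)^{-\rho}\Lambda(t)^\rho$ as well, so multiplying by the prefactor $\lambda(s)^{-1}\Lambda(t)^\rho/\Lambda(s)^{\rho-1}$ in~\eqref{eq:Phi1} and using $2\rho=1-\mu$ produces the claimed bound $\lambda(s)^{-1}\Lambda(s)^\mu\Lambda(t)^{1-\mu}$.

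For~\eqref{eq:decayvsmall} and~\eqref{eq:decayvsmallcrit}, I would split according to frequency and write
\[ \|v(t,\cdot)\|_{L^2}\lesssim \|\Phi_1\|_{L^\infty(I_1)}\|v_1\|_{L^2}+\|\Phi_1\|_{L^q(I_2\cup I_3)}\|v_1\|_{L^m} \,. \]
The $I_1$ contribution is already controlled above by $\lambda(s)^{-1}\Lambda(s)^{1+\mu/2}\Lambda(t)^{-\mu/2}$, which, thanks to the restriction $\mu\leq 2-n(2/m-1)$, is bounded by the target factor $\lambda(s)^{-1}\Lambda(s)^{\mu+n(1/m-1/2)}\Lambda(t)^{1-\mu-n(1/m-1/2)}$. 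For the $L^q$ part, the decisive integral is
\[ J_2 = \int_{|\xi|\in I_2} |\xi|^{-q(\rho+1/2)} \, d\xi \,, \]
whose behaviour depends on the sign of $n-q(\rho+1/2)$. The assumption $\mu<2-n(2/m-1)$ is exactly equivalent to $q(\rho+1/2)>n$, so~$J_2$ is controlled by its lower endpoint~$|\xi|\sim \Lambda(t)^{-1}$, giving $J_2^{1/q}\lesssim \Lambda(t)^{\rho+1/2-n/q}$; the contribution of~$I_3$ is harmless by~\eqref{eq:I}. Collecting everything yields~\eqref{eq:decayvsmall}. In the critical case $\mu=2-n(2/m-1)$ one has $q(\rho+1/2)=n$ exactly, so~$J_2$ becomes a logarithm in $\Lambda(t)/\Lambda(s)$, producing~\eqref{eq:decayvsmallcrit}.

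The main obstacle is the bookkeeping due to the change of sign of~$\rho$ with respect to Lemma~\ref{Lem:CPlinm}: every inequality of the form $|\xi|^\alpha\lesssim \Lambda(s)^{-\alpha}$ or $|\xi|^\alpha\lesssim \Lambda(t)^{-\alpha}$ used in the previous proofs must be re-examined and applied with the correct endpoint, since otherwise one loses powers of~$\Lambda(t)$ and the threshold condition $\mu<2-n(2/m-1)$ (respectively the equality case giving the logarithm) cannot be matched. Once this is done carefully, the argument is a straightforward adaptation of the one already written for Lemmas~\ref{Lem:CPlin2} and~\ref{Lem:CPlinm}.
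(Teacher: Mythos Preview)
Your proposal is correct and follows essentially the same approach as the paper: the paper's proof also restricts to~$\Psi_{0,\rho,0}$ (since $v_0\equiv0$), notes that now $\rho\in(0,1/2)$, keeps the $I_1$ estimate unchanged, uses $|\Psi_{0,\rho,0}|\lesssim |\xi|^{-\rho-1/2}\Lambda(s)^{-\rho}\Lambda(t)^{-1/2}$ in~$I_2$ together with the convergence condition $q(\rho+1/2)>n\Leftrightarrow\mu<2-n(2/m-1)$, and $|\Psi_{0,\rho,0}|\lesssim \Lambda(s)^{-\rho}\Lambda(t)^{\rho}$ in~$I_3$. The only minor remark is that your parenthetical ``opposite to the sign convention used in the proof of Lemma~\ref{Lem:CPlinm}'' is slightly misleading: in the $L^\infty$ bound for~$I_2$ the lower endpoint $|\xi|\gtrsim\Lambda(t)^{-1}$ is applied in both the case $\rho\leq0$ and the case $\rho>0$, the difference being only that now $|\rho|=\rho$ rather than $-\rho$.
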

\begin{proof}
We only prove~\eqref{eq:decayvsmall}, being the other two estimates similar. We follow the proof of Lemma~\ref{Lem:CPlinm}, but now~$\rho\in(0,1/2)$. The estimate in~$I_1$ remains the same. In $I_2$ we may estimate
\[ |\Psi_{0,\rho,0}| \lesssim |\xi|^{-\rho-1/2} \Lambda(s)^{-\rho}\Lambda(t)^{-1/2} \,,\]
therefore, using~$q(-\rho-1/2)<-n$, that is, $\mu < 2 -n(2/m-1)$, we derive
\[ \int_{|\xi|\in I_2} |\xi|^{-q(\rho+1/2)}\,d\xi \lesssim \Lambda(t)^{q(\rho+1/2)-n}\,. \]
%
%
On the other hand, in $I_3$ we may estimate $|\Psi_{0,\rho,0}|\lesssim \Lambda(s)^{-\rho}\Lambda(t)^{\rho}$, therefore
\[ \int_{|\xi|\in I_3} 1\,d\xi \lesssim \Lambda(t)^{-n}\,. \]
%
%
%
Summarizing, we proved
\[ \|\Psi_{0,\rho,0}\|_{L^q(I_2\cap I_3)}\lesssim \Lambda(s)^{-\rho}\,\Lambda(t)^{\rho-n/q}\,,\]
%
hence estimate~\eqref{eq:decayvsmall} follows.
\end{proof}


\section{Additional considerations in one space dimension} 

In this Appendix we fix~$n=1$.

If~$\mu\in[2,3)$, according to Corollary~\ref{Cor:ell}, if data are small in~$\mathcal{D}_\ell$ then we have global existence for any~$p\geq \mu-1$ satisfying~\eqref{eq:pell}, i.e.
\[ p > 1 + \frac{2(2+\gamma)}{\mu-1}\,. \]
If~$\mu\in[1,2)$, according to Remark~\ref{Rem:L2small}, if data are small in~$H^1\times L^2$ then we have global existence for any
\[ p > 1 + \frac{4(2+\gamma)}\mu\,.\]
However, we may improve this lower bound for~$p$ if data are small in~$\mathcal{D}_1$.
\begin{Cor}\label{Cor:mix}
Let~$n=1$, $\mu\in[1,3)$ and~$p\geq2$, satisfying
\begin{equation}\label{eq:p12}
p > 1 + \frac{4(2+\gamma)}{\mu+1}\,.
\end{equation}
Then for any initial data satisfying~\eqref{eq:dataL1} there exists a solution to~\eqref{eq:diss}. Moreover, estimate~\eqref{eq:uLm} with~$m=1$ holds for the solution, together with
\begin{equation}\label{eq:en1ell}
\|(\lambda\nabla u,u_t)(t,\cdot)\|_{L^2}\lesssim \begin{cases}
\lambda(t)\,\Lambda(t)^{-\frac\mu2}\log(e+\Lambda(t))\, \|(u_0,u_1)\|_{\mathcal{D}_1} & \text{if~$\mu\in(2,3)$,}\\
\lambda(t)\,\Lambda(t)^{-\frac\mu2}\, \|(u_0,u_1)\|_{\mathcal{D}_1} \,, & \text{if~$\mu\in[1,2]$,}
\end{cases}
\end{equation}
for its energy.
\end{Cor}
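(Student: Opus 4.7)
The plan is to adapt the fixed-point scheme of Section~\ref{sec:well} to this one-dimensional mixed setting by using \emph{different} linear estimates for the $L^2$-norm of the solution and for its energy: for $\mu\in[1,3)$ no single application of Lemma~\ref{Lem:CPlinm} yields simultaneously the rate $\Lambda(t)^{-1/2}$ for $u$ and the rate~$\lambda(t)\Lambda(t)^{-\mu/2}$ for its energy. I would work in the space
\[ \|u\|_{X(t)}\doteq\sup_{\tau\in[0,t]}\left\{\Lambda(\tau)^{1/2}\|u(\tau,\cdot)\|_{L^2}+\frac{\Lambda(\tau)^{\mu/2}}{\lambda(\tau)\,L(\tau)}\,\|(\lambda\nabla u,u_t)(\tau,\cdot)\|_{L^2}\right\}, \]
with $L(\tau)\equiv 1$ if $\mu\in[1,2]$ and $L(\tau)=\log(e+\Lambda(\tau))$ if $\mu\in(2,3)$; the companion norm~$\|u\|_{X_0(t)}$ (dropping the $u_t$-term) serves in the contraction step.

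For the $L^2$-norm of~$Nu$ I would apply~\eqref{eq:decayvm} with $m=1$ (resp.\ the critical variant~\eqref{eq:decayvcrit} at~$\mu=1$, where the extra $\log$ is absorbed harmlessly). For the energy I would split according to $\mu$: the noneffective estimate~\eqref{eq:decayenvsmall} for $\mu\in[1,2)$; the $L^2\to L^2$ estimate~\eqref{eq:decayenv2} for $\mu=2$; and the critical estimate~\eqref{eq:decayenvcrit} with~$m=\ell=2/(\mu-1)\in(1,2)$ for $\mu\in(2,3)$, using the embedding $\mathcal{D}_1\hookrightarrow\mathcal{D}_\ell$. The nonlinearity is controlled through~\eqref{eq:disscontr} and the one-dimensional Gagliardo--Nirenberg inequality $\|u\|_{L^q}\lesssim\|u\|_{L^2}^{1/2+1/q}\|u_x\|_{L^2}^{1/2-1/q}$ (valid for $q\geq 2$, hence for $q=p,2p$ thanks to $p\geq 2$), which gives
\begin{align*}
\|f(s,u)\|_{L^1}
    & \lesssim \lambda(s)^2\Lambda(s)^{\gamma}\,\|u\|_{X(s)}^p\,\Lambda(s)^{-(p+2)/4-\mu(p-2)/4}\,L(s)^{p/2-1},\\
\|f(s,u)\|_{L^2}
    & \lesssim \lambda(s)^2\Lambda(s)^{\gamma}\,\|u\|_{X(s)}^p\,\Lambda(s)^{-(p+1)/4-\mu(p-1)/4}\,L(s)^{(p-1)/2},
\end{align*}
together with a corresponding bound on $\|f(s,u)\|_{L^\ell}$ in the range $\mu\in(2,3)$.

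Inserting these bounds into the Duhamel kernels and changing variables $r=\Lambda(s)$ turns every integral into one of the form $\int^{\Lambda(t)}r^\alpha(\log r)^\beta\,dr$. A direct computation shows that the binding constraint is the convergence of the $L^2$-source contribution to the $L^2$-estimate of the solution, namely $\tfrac32+\gamma-\tfrac{p+1}{4}-\tfrac{\mu(p-1)}{4}<-1$, which is exactly~\eqref{eq:p12}. The $L^1$-source piece for the solution gives the strictly weaker condition $p>2+4(1+\gamma)/(\mu+1)$, and the energy Duhamel integrals (with the appropriate linear kernel in each regime of~$\mu$) give $p>3+4\gamma/(\mu+1)$, both automatic under~\eqref{eq:p12} as long as $\mu<3$. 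The same computations produce a Lipschitz estimate for $Nu-N\tilde{u}$.

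The main technical obstacle is the case $\mu\in(2,3)$, where the logarithm $\log(1+\Lambda(t)/\Lambda(s))$ of~\eqref{eq:decayenvcrit} is absorbed into $L(\tau)$ in the norm and then reappears inside $\|u\|_{L^{2p}}^p$ with a power $(p-1)/2$: one must verify that the strict margin built into~\eqref{eq:p12} (shifting the polynomial exponent below~$-1$) absorbs any finite power of $\log r$ under the Duhamel integral. Once this has been checked, the Banach fixed-point argument of Section~\ref{sec:well} produces the unique global solution, and the estimates~\eqref{eq:uLm} (with $m=1$) and~\eqref{eq:en1ell} are read off from the definition of~$\|u\|_{X(t)}$.
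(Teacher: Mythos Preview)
Your proposal is correct and follows essentially the same approach as the paper's own proof: the same mixed norm on~$X(t)$, the linear estimate~\eqref{eq:decayvm} with~$m=1$ for the solution, the critical energy estimate~\eqref{eq:decayenvcrit} with~$m=\ell=2/(\mu-1)$ for~$\mu\in(2,3)$ (and the $L^2\to L^2$ estimates~\eqref{eq:decayenv2}, \eqref{eq:decayenvsmall} for~$\mu\leq2$), the one-dimensional Gagliardo--Nirenberg interpolation, and the identification of the binding constraint as the $L^2$-source contribution to the $L^2$-estimate of the solution, yielding exactly~\eqref{eq:p12}. The paper organizes the same computation via the shorthand $p_r=p\tfrac14(\mu+1)-\tfrac1{2r}(\mu-1)$ for $r=1,\ell,2$ and checks the chain $p_\ell>p_1>p_2-\tfrac12$, but the content is identical to yours.
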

We remark that the exponent in~\eqref{eq:p12} is lower than the one in~\eqref{eq:pell} for any~$\mu\in[2,3)$, and it is lower than the one in~\eqref{eq:p2small} for any~$\mu\in[1,2)$. This improvement does not appear in space dimension~$n\geq2$, if one extends this strategy.
\begin{proof}
We prove for~$\mu\in(2,3)$, being the case~$\mu\in[1,2]$ analogous and simpler. We follow the proof of Theorem~\ref{Thm:L1m} but we consider the norm on~$X_0(t)$ given by
\[ \|w\|_{X_0(t)} \doteq \sup_{0\leq \tau\leq t} \bigl( \Lambda(\tau)^{\frac12}\|w(\tau,\cdot)\|_{L^2} + \Lambda(\tau)^{\frac\mu2} \bigl(\log (e+\Lambda(\tau))\bigr)^{-1}\|\nabla w(\tau,\cdot)\|_{L^2} \bigr)\,,\]
and similarly the norm on~$X(t)$. Using~\eqref{eq:GN}, we may estimate
\begin{align}
\nonumber
\|u(s,\cdot)\|_{L^q}
	& \lesssim \|u\|_{X_0(s)}^p \Lambda(s)^{-(1-\theta(q))\frac{n}2-\theta(q)\frac\mu2}\bigl(\log (e+\Lambda(\tau))\bigr)^{\theta(q)}\\
\label{eq:GNmix}
	& = \Lambda(s)^{-\frac12-\theta(q)\frac{\mu-1}2} \bigl(\log (e+\Lambda(\tau))\bigr)^{\theta(q)} \,,
\end{align}
for~$q=p, \ell p, 2p$, that is, \eqref{eq:fusGNlowp}-\eqref{eq:fusGNlow2p} are replaced by
\begin{align*}
\|f(u(s,\cdot))\|_{L^1}
    & \lesssim \|u\|_{X_0(s)}^p \Lambda(s)^{\gamma-p(\frac1m-\frac12+\theta(p)\frac{\mu-1}2)}= \|u\|_{X_0(s)}^p \Lambda(s)^{\gamma-p\frac14(\mu+1)+\frac12(\mu-1)}\,,\\
\|f(u(s,\cdot))\|_{L^\ell}
    & \lesssim \|u\|_{X_0(s)}^p \Lambda(s)^{\gamma-p(\frac1m-\frac12+\theta(\ell p)\frac{\mu-1}2)}= \|u\|_{X_0(s)}^p \Lambda(s)^{\gamma-p\frac14(\mu+1)+\frac1{2\ell}(\mu-1)}\,,\\
\|f(u(s,\cdot))\|_{L^2}
    & \lesssim \|u\|_{X_0(s)}^p \Lambda(s)^{\gamma-p(\frac1m-\frac12+\theta(2p)\frac{\mu-1}2)} = \|u\|_{X_0(s)}^p \Lambda(s)^{\gamma-p\frac{n}4(\mu+1)+\frac14(\mu-1)}\,.
\end{align*}
Let us put
\[ p_r \doteq p\,\frac14\,(\mu+1)-\frac1{2r}\,(\mu-1) \,, \qquad r=1,\ell,2\,. \]
Using \eqref{eq:decayvm} with~$m=1$ and \eqref{eq:decayenvcrit} with~$m=\ell$ we obtain
\begin{align}
\nonumber
\|Nu(t,\cdot)\|_{L^2}
    & \lesssim \Lambda(t)^{-\frac12}\,\|(u_0,u_1)\|_{L^m\cap L^2}\\
\label{eq:Nu2mix}
    & \quad + \|u\|_{X_0(t)}^p\, \Lambda(t)^{-\frac12}\int_0^t \lambda(s)\,\Lambda(s)^{1+\gamma-p_1}\bigl(\log (e+\Lambda(\tau))\bigr)^p ds \\
\label{eq:Nu3mix}
    & \quad + \|u\|_{X_0(t)}^p\, \Lambda(t)^{-\frac12} \int_0^t \lambda(s)\,\Lambda(s)^{1+\frac12+\gamma-p_2}\bigl(\log (e+\Lambda(\tau))\bigr)^p\,ds\,,\\
\nonumber
\|\nabla Nu(t,\cdot)\|_{L^2}
    & \lesssim \Lambda(t)^{-\frac\mu2} \log(e+\Lambda(t))\,\|(u_0,u_1)\|_{\mathcal{D}_m}\\
\label{eq:Nux2mix}
    & \quad + \|u\|_{X_0(t)}^p\, \Lambda(t)^{-\frac\mu2} \log(e+\Lambda(t)) \int_0^t \lambda(s)\,\Lambda(s)^{1+\gamma-p_\ell}\bigl(\log (e+\Lambda(\tau))\bigr)^p \,ds \\
\label{eq:Nux3mix}
    & \quad + \|u\|_{X_0(t)}^p\, \Lambda(t)^{-\frac\mu2} \int_0^t \lambda(s)\,\Lambda(s)^{\frac\mu2+\gamma-p_2}\bigl(\log (e+\Lambda(\tau))\bigr)^p\,ds\,,
\end{align}
and similarly for~$\partial_t Nu$. We notice that
\[ p_\ell > p_1 > p_2 - \frac12 \,, \qquad \text{and that} \quad p_2 +1 - \frac\mu2 > p_2 -\frac12 = (p-1) \frac14 (\mu+1) \,, \]
therefore the integrals in \eqref{eq:Nu2mix}-\eqref{eq:Nu3mix}-\eqref{eq:Nux2mix}-\eqref{eq:Nux3mix} are bounded if, and only if, $(p_2-1/2)>2+\gamma$, that is, \eqref{eq:p12}.
\end{proof}
We remark that in space dimension~$n=1$ the classical semilinear wave equation $u_{tt}-\triangle u=|u|^p$ admits no global solution, for any~$p>1$. Therefore, we still have concrete benefits from the damping term, even below the threshold~$\mu=2$. Moreover, if~$\mu\in(0,1]$, one may use the linear estimate~\eqref{eq:decayvsmallcrit} to obtain global existence by assuming smallness of the initial data in~$\mathcal{D}_\kappa$, where
\[ \kappa(\mu) \doteq \frac2{3-\mu}\,, \]
for any~$p\geq 4/(3-\mu)$ such that
\begin{equation}\label{eq:phyp}
p > 1 + \frac{2(2+\gamma)}\mu\,.
\end{equation}
In~\cite{Waka} it is proved that if~$\mu\in(0,1)$ and~$f=f(u)=|u|^p$, then there exists no global solution to~\eqref{eq:diss1} for any
\begin{equation}\label{eq:wakablow}
1 < p \leq 1+\frac2{n-(1-\mu)}\,,
\end{equation}
provided that~$u_1\in L^1$ and
\[ \int_{\R^n} u_1(x)\,dx >0\,. \]
We notice that the exponent in~\eqref{eq:wakablow} tends to Fujita exponent~$1+2/n$ as~$\mu\to1$ and to Kato exponent~$1+2/(n-1)$ (see~\cite{Kato, strauss}) as~$\mu\to0$. This effect is related to the loss of parabolic properties of the equation in~\eqref{eq:diss1} as~$\mu$ becomes smaller, in particular under the threshold~$\mu=1$. Following the proof of Theorem~1.4 in~\cite{Waka}, condition~\eqref{eq:wakablow} can be easily extended to
\[ 1 < p \leq 1+\frac{2+\gamma}{n-(1-\mu)}\,. \]
if~$f(t,u)\gtrsim (1+t)^\gamma |u|^p$. This exponent gives~$1+(2+\gamma)/\mu$ in space dimension~$n=1$. Still, there exists a gap between the exponents in~\eqref{eq:phyp} and~\eqref{eq:wakablow}. The problem to cover this gap remains open.

\end{document}